\newtheorem{theorem}{Theorem}[section]
\theoremstyle{definition}
\newtheorem{proposition}[theorem]{Proposition}
\newtheorem{definition}[theorem]{Definition}
\newtheorem{conjecture}[theorem]{Conjecture}
\def\BN{\mathbbm N}
\def\BZ{\mathbbm Z}
\def\BQ{\mathbbm Q}
\def\BC{\mathbbm C}
\def\calM{\mathcal M}
\def\s{\sigma}
\def\la{\langle}
\def\ra{\rangle}
\def\SL{\mathrm{SL}}
\def\be{\begin{equation}}
\def\ee{\end{equation}}
\def\z{\zeta}
\def\DJ{\mathrm{DJ}}
\def\DK{\mathrm{DJ}}
\def\J{\mathrm{J}}
\def\H{\mathrm{H}}
\def\Zhat{\widehat{\BZ[q]}}
\def\bJ{\mathbf{J}}
\newcommand{\prin}[2]{\{#2\}_{#1}}
\newcommand{\hev}[2]{\operatorname{\theta}_{#1}\!\left(#2\right)}
\begin{document}
\title[The descendant colored Jones polynomials]{
       The descendant colored Jones polynomials}
\author{Stavros Garoufalidis}
\address{
  International Center for Mathematics, Department of Mathematics \\
  Southern University of Science and Technology \\
  Shenzhen, China \newline
  {\tt \url{http://people.mpim-bonn.mpg.de/stavros}}}
\email{stavros@mpim-bonn.mpg.de}
\author{Rinat Kashaev}
\address{Section de Math\'ematiques, Universit\'e de Gen\`eve \\
rue du Conseil-Général 7-9, 1205 Gen\`eve, Switzerland \newline
         {\tt \url{http://www.unige.ch/math/folks/kashaev}}}
\email{Rinat.Kashaev@unige.ch}
\thanks{
{\em Key words and phrases:}
knots, Jones polynomial, colored  Jones polynomials, Kashaev invariant,
Habiro ring, Habiro polynomials, cyclotomic expansion, ADO invariants, descendants,
holomorphic quantum modular forms, Volume Conjecture, Quantum Modularity Conjecture,
$q$-holonomic functions, $q$-hypergeometric functions.
}

\date{19 June 2022}
\dedicatory{In memory of our friend, Vaughan Jones.}

\begin{abstract}
  We discuss two realizations of the colored Jones polynomials
  of a knot, one appearing in an unnoticed work of the second author in 1994
  on quantum R-matrices at roots of unity obtained from solutions of the
  pentagon identity, and another formulated in terms of a sequence of elements
  of the Habiro ring appearing in recent work of D. Zagier and the first
  author on the Refined Quantum Modularity Conjecture.
\end{abstract}

\maketitle

{\footnotesize
\tableofcontents
}


\section{Introduction}
\label{sec.intro}

The Jones polynomial~\cite{Jones} is a fascinating polynomial invariant
of knots with deep connections to the topology and geometry in dimension 3.
Its discovery in 1984, apart from revolutionizing Knot Theory, lead to a new area
of research, Quantum Topology, with applications and challenges that we will not
attempt to summarize here. The Jones polynomial and its colored  versions (a
sequence of polynomials, one for every irreducible finite dimensional representation
of the Lie algebra $\mathfrak{sl}_2$) are versatile invariants
with numerous interpretations, among them as partition functions of a 3-dimensional
Chern--Simons gauge theory~\cite{Witten:CS}. This sequence of Laurent
polynomials with integer coefficients (the so-called colored Jones polynomials of
a knot), introduced by Kirillov--Reshetikhin~\cite{KR} and Turaev~\cite{Tu:YB}, 
is not a random sequence of polynomials. Indeed, it was shown in 
~\cite{GL:qholonomic} that it is $q$-holonomic, i.e., that it satisfies a nontrivial
recursion relation with coefficients in $\BZ[q,q^n]$. Moreover, a canonically chosen
recursion is a knot invariant (the so-called $\hat A$-polynomial of a knot) which is
conjectured to determine the $\SL_2(\BC)$-character variety of the knot, viewed
from the boundary~\cite{Ga:AJ}. The colored Jones polynomial of a knot 
has appeared in equivalent forms in several occasions, such as the sequence of
Habiro polynomials of a knot~\cite{Ha,Habiro:WRT} and the $\mathfrak{sl}_2$
Akutsu--Deguchi--Ohtsuki invariants~\cite{ADO} which only recently were shown to
be equivalent to the colored Jones polynomials~\cite{Willetts}. 

An unexpected connection of the colored  Jones polynomials and hyperbolic
geometry~\cite{Thurston} arose from the Volume Conjecture~\cite{K97} for the Kashaev
invariant of a knot whose exponential growth rate is conjectured to be the suitably
normalized Volume of a hyperbolic knot. This conjecture, combined with the
identification of the Kashaev invariant with an evaluation of the colored  Jones
polynomials by Murakami--Murakami~\cite{MM}, linked the Jones polynomials with
hyperbolic geometry. 

In this paper we discuss two equivalent reformulations (or rather, realizations)
of the colored Jones polynomials of a knot: one is a sequence $\DJ^{(m)}(q)$
of elements of the Habiro ring defined in Equation~\eqref{Jm} below and shown to be
equivalent to the colored Jones polynomial of a knot in Proposition
~\ref{prop.equiv}. This sequence of elements of the Habiro ring defines the top row
of a matrix of knot invariants that arises in recent work of D.~Zagier and the first
author regarding the Refined Quantum Modularity Conjecture~\cite{GZ:kashaev}.  

A second reformulation of the colored Jones polynomials comes from 
unnoticed work of the second author in 1994 regarding quantum
R-matrices at roots of unity obtained from solutions of the pentagon
identity~\cite{Kashaev:algebraic}. These R-matrix invariants $\la K \ra_{N,n}$
of a knot $K$ are defined for a primitive $N$-th root of unity $\zeta$ (of strictly
positive order), and for a number $n \in \BZ/N\BZ$ in Theorem~\ref{thm.KNn} and
are conjectured to be equivalent to the colored Jones polynomials of a knot; see
Conjecture ~\ref{conj.2} below.

\subsection*{Acknowledgements} 

The authors wish to thank Jie Gu, Marcos Mari\~{n}o, Campbell Wheeler and Don Zagier
for many enlightening conversations, and the International Math Center in SUSTech
and the University of Geneva for their hospitality. The work of R.~K. is partially
supported by the Swiss National Science Foundation, the subsidies no~$200020\_192081$
and no~$200020\_200400$, and the Russian Science Foundation, the subsidy no~21-41-00018.


\section{The descendant colored Jones polynomials of a knot}
\label{sec.desc}

\subsection{A conjectured matrix-valued invariant from~\cite{GZ:kashaev}}

We begin by explaining the most recent reappearance of the colored Jones polynomials.
An extension of the Volume Conjecture is the Quantum Modularity
Conjecture of Zagier~\cite{Za:QMF} which, roughly speaking, discusses the
asymptotics of the Kashaev invariant at roots of unity. More recently, a
Refined Quantum Modularity Conjecture was proposed in~\cite{GZ:kashaev} which
suggests the existence of a matrix-valued invariant defined at (and near) roots
of unity.
These conjectured matrix-valued invariants (denoted by $\bJ(x)$ for $x \in \BQ/\BZ$)
were studied in~\cite[Sec.5]{GZ:kashaev} with explicit examples given for the
$4_1$ and the $5_2$ knots in Sections 7.1 and 7.2 of~\cite{GZ:kashaev}, respectively.

The conjectured matrix-valued knot invariant $\bJ$ (defined as a 1-periodic function
at the rationals, or alternatively as a function at the complex roots of unity),
has many interesting analytic, arithmetic properties and geometric properties which
are listed in detail in the introduction (Part 0) of~\cite{GZ:kashaev}.
The rows and columns of the matrix $\bJ$ are parametrized by the set of boundary
parabolic $\SL_2(\BC)$-representations of the knot (assuming the latter set is finite),
and this set always contains two distinguished elements, the trivial representation
$\s_0$, and the geometric representation $\s_1$ of a hyperbolic knot. The $\s_0$-column
of $\bJ$ is $(1,0,\dots,0)^t$ but the $\s_0$-row is very interesting. Its
$(\s_0,\s_1)$-entry is the Kashaev invariant of a knot, whereas the remaining entries
are expected to be elements of the Habiro ring (whose definition is recalled below),
tensored with the rational numbers.
Moreover, the entries of each row of the matrix are expected to extend to sequences
that satisfy a homogeneous recursion relation (if not the $\s_0$-row), or an
inhomogeneous recursion (if the $\s_0$-row). In other words, the matrix $\bJ$ is
expected to be a fundamental matrix solution of a $q$-holonomic module.

In~\cite[Sec.7.3,7.4]{Za:QMF} a method was proposed for defining all but the first row
of this matrix, given a suitable ideal triangulation of the knot. But the first row
of the above matrix remained elusive. The goal of this paper is to propose a definition
for this first row and verify some of its conjectured properties. After a linear
change of variables, the first row of the matrix $\bJ$ is conjecturally equal to one
of the best-known quantum knot invariants, namely the colored Jones polynomials; see
Conjecture~\ref{conj.1} below.

\subsection{Definition of the descendants}

In this section we extend the 2-parameter colored Jones function $\J_n(q)$ of a knot
in 3-space into a 3-parameter \emph{descendant colored Jones} function $\DJ_n^{(m)}(q)$
for $n \geq 0$, $m \in \BZ$, which
\begin{itemize}
\item 
  specializes  to the colored  Jones polynomials $\J_n(q)$ when $m=0$ and $n \geq 1$;
\item 
  specializes to a sequence of elements of the Habiro ring $\DK^{(m)}(q)$ when $n=0$;
\item 
  is determined by either of the above  specializations;
\item 
  is determined by a 3-parameter function $\DJ^{(m)}(x,q)$ by
  $\DJ_n^{(m)}(q)=\DJ^{(m)}(q^n,q)$ for all $n \geq 1$.
\end{itemize}
In other words, we have a commutative diagram
{
\be
\label{3term}
\begin{diagram}
  \node[2]{\DJ^{(m)}_n(q)} \arrow{sw,l}{m=0} \arrow{se,l}{n=0} \\
  \node{\J_n(q)} \node[2]{\DK^{(m)}(q)} \\
\node[2]{\DJ^{(m)}(x,q)} \arrow[2]{n,r}{x=q^n} \arrow{nw} \arrow{ne}   
\end{diagram}
\ee
}
To define the functions appearing in this diagram, we start with the 
colored  Jones polynomials $\J^K_n(q) \in \BZ[q^\pm]$ of a knot $K$ (mostly
omitted from the notation), colored  by the $n$-dimensional irreducible representation
of $\mathfrak{sl}_2$ for $n \geq 1$ and normalized to be 1 at the unknot~\cite{Tu:book}.
In~\cite{Habiro:WRT} Habiro proved that the colored  Jones polynomials can be written
in the form
\be
\label{JH}
\J_n(q) = \sum_{k=0}^\infty c_{n,k}(q) \H_k(q), \qquad 
c_{n,k}(q) = q^{-kn} (q^{n+1};q)_k (q^{n-1};q^{-1})_k 
\ee
where $\H_k(q) \in \BZ[q^\pm]$ for all $k \geq 0$ and
$c_{n,k}(q)$ is the cyclotomic kernel which is independent of the knot and vanishes
when $k \geq n \geq 1$. Equation~\eqref{JH} can be inverted~\cite[Lem.~6.1]{Habiro:WRT}
\be
\label{HJ}
\H_k(q) = \sum_{n=1}^{k+1} \gamma_{k,n}(q) \J_n(q), \qquad \gamma_{k,n}(q) = 
(-1)^{k-n-1}
\frac{q^{\frac{k(k+3)+n(n-3)}{2}+1}(1-q^{n})(1-q^{2n})}{(q;q)_{k+n+1} (q;q)_{k-n+1}}
\ee
where $\gamma_{k,n}(q)=0$ for $n \geq k+2$ and $k \geq 0$. 
There are three important features in Habiro's expansion~\eqref{JH} of the colored
Jones polynomials.
\begin{itemize}
  \item[(i)] The Habiro polynomials $\H_k(q)$ which a priori lie in $\BQ(q)$
(as follows from~\eqref{HJ}), actually lie in $\BZ[q^\pm]$.
\item[(ii)] Setting $n=0$ in the right hand side of~\eqref{JH} gives a well-defined
  element of the Habiro ring $\Zhat = \varprojlim_n \BZ[q]/((q;q)_n)$.
\item[(ii)] The dependence on the color comes only through the cyclotomic kernel
  $c_{n,k}(q)$. Thus, one can replace $q^n$ by a variable $x$ and define an element
  $\J(x,q)$ of the colored  Habiro ring
  $\Lambda=\varprojlim_n \BZ[q^\pm][x+x^{-1}-2]/(c_n(x,q))$,
  so that $\J(q^n,q)=\J_n(q)$ for $n \geq 1$, where
  \be
  \label{JHx}
\J(x,q) = \sum_{k=0}^\infty c_{k}(x,q) \H_k(q), \qquad
c_{k}(x,q) = x^{-k}(q x;q)_k (q^{-1}x;q^{-1})_k \,.
  \ee
\end{itemize}


We now have all the ingredients to define the descendant colored Jones function.
\begin{definition}
  \label{def.desc}
For an integer $m$ and for $n \geq 0$, we define
\be
\label{Jmn}
\DJ^{(m)}_n(q) =  \sum_{k=0}^\infty c_{n,k}(q) \H_k(q) \, q^{km},
\qquad
\DJ^{(m)}(x,q) = \sum_{k=0}^\infty c_{k}(x,q) \H_k(q) q^{km} 
\ee
and
\be
\label{Jm}
\DK^{(m)}(q) =  \sum_{k=0}^\infty (q;q)_k (q^{-1};q^{-1})_k  \H_k(q) \, q^{km} \,.
\ee
\end{definition}

\subsection{Properties of the descendants}

From their very definition, the $0$-th descendants $\DJ^{(0)}_n(q)$ and $\DJ^{(0)}(q)$
are nothing but the colored Jones polynomial $\J_n(q)$ and the Kashaev invariant of
the knot. Thus, one may call $\DJ^{(m)}_n(q)$ and $\DJ^{(m)}(q)$ the descendant
colored Jones and the descendant Kashaev invariant of a knot, respectively.

Note further that for all integers $m$, the 3-variable invariant $\DJ^{(m)}(x,q)$ is
an element of the colored  Habiro ring, and, for all $n \geq 0$, it
satisfies $\DJ^{(m)}(q^n,q)=\DJ^{(m)}_n(q)$.

Note moreover that $\DJ^{(m)}_n(q) \in \BZ[q^\pm]$ for $n \geq 1$ whereas
$\DK^{(m)}(q) \in \Zhat$. In a sense, $\DK^{(m)}(q)$ is a twisted Fourier transform of
$\J_n(q)$.

The next proposition describes the equivalence among these knot invariants. 
Let $\mu$ denote the set of complex roots of unity.

\begin{proposition}
  \label{prop.equiv}
  Each family of invariants $\{\J_n(q)| n \in \BZ_{\geq 0}\}$,
  $\{\DJ^{(m)}_n(q)| n \in \BZ_{\geq 0}, m \in \BZ\}$, $\{\DJ^{(m)}(x,q)| m \in \BZ\}$,
  $\{\DK^{(m)}(q)|m \in \BZ\}$, $\{\J_n(\z)|n \in \BZ_{\geq 0}, \z \in \mu\}$,
  $\{\DJ^{(m)}_n(\z)|n \in \BZ_{\geq 0}, \z \in \mu\}$, $\{\DJ^{(m)}(x,\z)
  m \in \BZ, \z \in \mu\}$, $\{\DK^{(m)}(\z)| m \in \BZ, \z \in \mu\}$ determines all
  other families. 
\end{proposition}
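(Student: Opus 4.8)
The plan is to route every equivalence through the single sequence of Habiro polynomials $\{\H_k(q)\}_{k \geq 0}$, treated as a hub: I will show that each of the eight families mutually determines $\{\H_k(q)\}$, and then invoke transitivity. One direction is immediate from the defining formulas. Indeed \eqref{JH}, \eqref{JHx}, \eqref{Jmn} and \eqref{Jm} express $\J_n(q)$, $\J(x,q)$, $\DJ^{(m)}_n(q)$, $\DJ^{(m)}(x,q)$ and $\DK^{(m)}(q)$ as knot-independent combinations of the $\H_k(q)$ with universal coefficients; the sums are finite for $n \geq 1$ (since $c_{n,k}$ vanishes for $k \geq n$) and converge in the relevant completed ring for $n=0$ and in the $x$-variable, so the full families — and, after evaluating $q=\z$, their root-of-unity counterparts — are determined by $\{\H_k(q)\}$.

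For the converse I treat the families in three groups. First, the polynomial families over $\BZ[q^\pm]$: from $\{\J_n(q)\}$ the Habiro inversion \eqref{HJ} recovers each $\H_k(q)$ by a finite sum; from $\{\DJ^{(m)}_n(q)\}$ or $\{\DJ^{(m)}(x,q)\}$ I first specialize to $m=0$ (and, in the latter case, to $x=q^n$) to obtain $\{\J_n(q)\}$, then apply \eqref{HJ}. Second, the root-of-unity families: for fixed $n \geq 1$ and $m$ each of $\J_n(q)$ and $\DJ^{(m)}_n(q)$ is a Laurent polynomial, hence is determined by its values at the infinitely many distinct points $\z \in \mu$, so $\{\J_n(\z)\}$ and $\{\DJ^{(m)}_n(\z)\}$ recover the corresponding polynomial families; the family $\{\DJ^{(m)}(x,\z)\}$ is handled by setting $m=0$, $x=\z^n$ to get $\J_n(\z)$; and $\{\DK^{(m)}(\z)\}$ recovers $\{\DK^{(m)}(q)\}$ by the fundamental rigidity property of the Habiro ring, that an element of $\Zhat$ is determined by its evaluations at all complex roots of unity.

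The remaining, and I expect only genuinely delicate, step is to recover $\{\H_k(q)\}$ from the descendant Kashaev family $\{\DK^{(m)}(q)\}_{m \in \BZ}$. Writing $b_k(q)=(q;q)_k(q^{-1};q^{-1})_k\H_k(q)$, the definition \eqref{Jm} reads $\DK^{(m)}(q)=\sum_{k \geq 0} b_k(q)\,(q^m)^k$, so that $\DK^{(m)}$ is the evaluation at $t=q^m$ of a fixed power series $\sum_k b_k(q)\,t^k$; this is precisely the ``twisted Fourier transform'' noted above, and the task is to invert it. The natural tool is the $q$-difference operator $E:\DK^{(m)}\mapsto\DK^{(m+1)}$, which acts on the $k$-th mode by multiplication by $q^k$, so that $\prod_{i \neq k}(E-q^i)$ annihilates every mode but the $k$-th. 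Modulo $(q;q)_N$ the series truncates to a polynomial of degree $<N$ in $t=q^m$, turning the inversion into a finite Vandermonde-type system in the sampled values $\DK^{(0)},\dots,\DK^{(N-1)}$. The point requiring care is that the relevant factors $\prod_{i \neq k}(q^k-q^i)$ are zero divisors in $\BZ[q]/((q;q)_N)$; the main work is to check that they are exactly compensated by the factor $(q;q)_k(q^{-1};q^{-1})_k$ built into \eqref{Jm}, so that the system can be solved for $\H_k(q)$ — equivalently, one may first pass to each root of unity $\z$, perform the now-numerical Fourier inversion there, and reassemble via the Habiro-ring rigidity already used. Verifying this compensation, and hence the invertibility of the transform over $\Zhat$, is the crux of the proof.
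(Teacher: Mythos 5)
Your overall architecture---routing every family through the Habiro polynomials $\H_k(q)$---is exactly the paper's, and the steps you do carry out (specialization to $m=0$ and $x=q^n$, inversion via \eqref{HJ}, determination of Laurent polynomials from their values at infinitely many roots of unity, Habiro-ring rigidity for $\Zhat$) all appear in the paper's proof. The one place where you stop, declaring an unverified ``crux,'' is precisely where you should not have stopped: the recovery of $\{\H_k(q)\}$ from the family $\DK^{(m)}$ needs no compensation lemma, because the root-of-unity route you mention in passing already closes it completely, and it is the paper's actual argument. Fix $\z$ of order $N$. Since $(\z;\z)_k=0$ for $k\geq N$, the series \eqref{Jm} evaluated at $q=\z$ is the finite sum $\DK^{(m)}(\z)=\sum_{k=0}^{N-1}b_k(\z)\z^{km}$, a function of $m\in\BZ/N\BZ$, and the inversion is performed over the field $\BQ(\z)$, where the $N\times N$ matrix $(\z^{km})$ is invertible; no zero divisors ever enter. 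This yields $b_k(\z)$ for every $\z$ of order greater than $k$ (and $b_k(\z)=0$ trivially for the remaining $\z$). Since $b_k(q)=(q;q)_k(q^{-1};q^{-1})_k\H_k(q)$ lies in $\BZ[q^\pm]$, it is determined by its values at these infinitely many points, and then $\H_k(q)=b_k(q)/\bigl((q;q)_k(q^{-1};q^{-1})_k\bigr)$ is a division of Laurent polynomials in $\BQ(q)$, not an inversion in $\Zhat$.

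In other words, no ``invertibility of the transform over $\Zhat$'' is required, and no cancellation between $\prod_{i\neq k}(q^k-q^i)$ and $(q;q)_k(q^{-1};q^{-1})_k$ has to be checked: that worry is an artifact of your first route (solving the Vandermonde system modulo $(q;q)_N$ over $\BZ[q]$), which you can simply discard. The only genuine subtlety in the root-of-unity route---and the reason one reconstructs the Laurent polynomial $b_k(q)$ globally before dividing, rather than solving for $\H_k(\z)$ separately at each $\z$---is that at a fixed $\z$ of order $N\leq k$ the value $\H_k(\z)$ is not recoverable at all, since its coefficient in \eqref{Jm} vanishes; so the division must happen after reassembly, which is exactly how the paper phrases it (``$\dots$ and hence $(q;q)_k(q^{-1};q^{-1})_k\H_k(q)$, since the latter are in $\BZ[q^\pm]$, and hence $\H_k(q)$''). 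With that substitution your proposal becomes, step for step, the proof in the paper.
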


\begin{proof} 
We first discuss the equivalence of invariants in the left triangle of
Diagram~\eqref{3term}. Clearly, the descendant colored  Jones polynomials
$\DJ^{(m)}_n(q)$ specialize to the colored  Jones polynomials $\J_n(q)$ when $m=0$.
Conversely, $\J_n(q)$ determines $\H_k(q)$ by Equation~\eqref{JH} and hence determines
$\DJ^{(m)}_n(q)$ by Equation~\eqref{Jmn}. 

The evaluation maps $\Lambda \to \Zhat$ given by $x \mapsto q^n$ for all $n$ give
an injection of $\Lambda$ to $\Zhat^\BN$ as was shown in~\cite{Habiro:WRT}. It
follows that $\DJ^{(m)}_n(q)$ determines $\DJ^{(m)}(x,q)$.  

This shows that each one of the invariants $\J_n(q)$, $\DJ^{(m)}_n(q)$,
$\DJ^{(m)}(x,q)$ determines all others. Moreover, since the collection of evaluation maps
$\Zhat \to \overline{\BQ}$ given by $q \mapsto \z$ uniquely determines an element
of the Habiro ring~\cite{Habiro:completion}, and the same holds for the colored  Habiro
ring~\cite{Habiro:WRT}, it follows that each one of the invariants $\J_n(q)$,
$\DJ^{(m)}_n(q)$, $\DJ^{(m)}(x,q)$, $\J_n(\z)$, $\DJ^{(m)}_n(\z)$, $\DJ^{(m)}(x,\z)$
determines all others. 

We now discuss the equivalence of invariants in the right triangle of
Diagram~\eqref{3term}. The descendant colored  Jones polynomials $\DJ^{(m)}_n(q)$
specialize to $\DK^{(m)}(q)$ when $n=0$, which specialize to 
$\DJ^{(m)}(\z)$ for any complex root of unity $\z$. Moreover, by
Equation~\eqref{Jmn} and the inversion of the discrete Fourier transform, it follows
that $\DJ^{(m)}(\z)$ determines $(\z;\z)_k (\z^{-1};\z^{-1})_k \, \H_k(\z)$, and hence,
$(q;q)_k (q^{-1};q^{-1})_k \, \H_k(q)$ (since the latter are in $\BZ[q^\pm]$),
and hence $\H_k(q)$, and hence $\DJ^{(m)}_n(q)$. 
\end{proof}

The next proposition concerns the $q$-holonomic properties of the descendant
colored Jones functions. For a detailed definition of $q$-holonomic functions in
several variables and their closure properties, we refer the reader to
~\cite{Zeil:holo, AB} and also to~\cite{Sab,GL:survey}. Roughly speaking,
$q$-holonomic functions have annihilating ideals (i.e., ideals of linear
$q$-difference equations) of maximal dimension.

\begin{proposition}
  \label{prop.qholo}
  $\DJ^{(m)}_n(q)$, $\DJ^{(m)}(x,q)$ and $\DK^{(m)}(q)$ are $q$-holonomic functions of
  $(m,n)$, $(m,x)$ and $m$, respectively. Linear $q$-difference equations for
  these functions can be obtained from one for the Habiro polynomials $\H_k(q)$. 
\end{proposition}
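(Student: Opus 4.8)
The plan is to derive everything from the closure properties of $q$-holonomic functions under products and under (definite) summation over an index, applied to the summands in Definition~\ref{def.desc}. First I would record that each elementary factor building these summands is $q$-holonomic. The cyclotomic kernel $c_{n,k}(q)=q^{-kn}(q^{n+1};q)_k(q^{n-1};q^{-1})_k$ is a $q$-proper hypergeometric term in $(n,k)$: each $q$-Pochhammer factor satisfies first-order recursions in both $k$ and $n$, and $q^{-kn}$ is annihilated by $L_n-q^{-k}$ and $L_k-q^{-n}$; hence $c_{n,k}(q)$ is $q$-holonomic in $(n,k)$. The same reasoning shows that $c_k(x,q)$ is $q$-holonomic in $(x,k)$ (with the $q$-dilation $x\mapsto qx$), that $(q;q)_k(q^{-1};q^{-1})_k$ is $q$-holonomic in $k$, and that $q^{km}$ is $q$-holonomic in $(k,m)$, being annihilated by $L_k-q^m$ and $L_m-q^k$. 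Finally $\H_k(q)$ is $q$-holonomic in $k$: the inversion kernel $\gamma_{k,n}(q)$ of~\eqref{HJ} is a $q$-proper hypergeometric term in $(k,n)$, so $\gamma_{k,n}(q)\J_n(q)$ is $q$-holonomic in $(k,n)$ because $\J_n(q)$ is $q$-holonomic in $n$ by~\cite{GL:qholonomic}, and the finite sum over $n$ in~\eqref{HJ} is again $q$-holonomic.

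With the building blocks in hand, I would assemble the result by closure. The product $c_{n,k}(q)\,\H_k(q)\,q^{km}$ is $q$-holonomic in $(n,k,m)$, and summing out the index $k$ yields a function $q$-holonomic in $(n,m)$; this is the content of the holonomic summation (creative telescoping) theorem of~\cite{Zeil:holo,AB}. Moreover the very mechanism of creative telescoping produces the asserted recursions: one feeds in the recursion for $\H_k(q)$ together with the explicit recursions for $c_{n,k}(q)$ and $q^{km}$, and elimination of $k$ returns a linear $q$-difference equation for $\DJ^{(m)}_n(q)$ in $(n,m)$. The identical argument, with $c_k(x,q)$ in place of $c_{n,k}(q)$ and with the $q$-dilation in $x$, gives $\DJ^{(m)}(x,q)$ $q$-holonomic in $(x,m)$, and with $(q;q)_k(q^{-1};q^{-1})_k$ it gives $\DK^{(m)}(q)$ $q$-holonomic in $m$.

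The point that requires care is the summation over $k$, which behaves differently in the two triangles of Diagram~\eqref{3term}. For $\DJ^{(m)}_n(q)$ with $n\geq1$ the kernel $c_{n,k}(q)$ vanishes for $k\geq n$, so the sum is finite and closure under summation is immediate; the only subtlety is that the upper limit $k=n-1$ moves with $n$, but the boundary terms produced by creative telescoping vanish precisely because $c_{n,k}(q)$ vanishes at the moving boundary. The genuine obstacle is the case of the infinite sums: $\DK^{(m)}(q)$ lands in the Habiro ring $\Zhat$ and $\DJ^{(m)}(x,q)$ in the colored Habiro ring $\Lambda$, which are completions rather than fields, whereas the holonomic-summation theorems are stated for $\BQ(q)$-valued sequences. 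I expect this to be the main difficulty, and I would resolve it by working at each truncation level: the $k$-th summand of $\DK^{(m)}(q)$ is divisible by $(q;q)_k(q^{-1};q^{-1})_k$, which lies in the ideal $((q;q)_N)$ once $k\geq N$, so modulo $((q;q)_N)$ the sum is finite, the finite-sum argument applies, and the resulting recursions are compatible with the projective system defining $\Zhat$. Passing to the inverse limit—using that the creative-telescoping certificate's boundary contributions vanish in the completion—yields a single linear $q$-difference equation for $\DK^{(m)}(q)$ in $m$, and the same truncation argument in $\Lambda$ handles $\DJ^{(m)}(x,q)$.
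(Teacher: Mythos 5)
Your argument is correct and is essentially the paper's own proof: both rest on the closure properties of $q$-holonomic functions, the $q$-proper hypergeometric nature of the kernels, the $q$-holonomicity of the colored Jones polynomials (hence of $\H_k(q)$ via \eqref{HJ}), and creative telescoping to extract recursions for the descendants from one for $\H_k(q)$, exactly as the paper does by citing Koutschan's implementation. Your extra care with the infinite sums valued in $\Zhat$ (truncation modulo $((q;q)_N)$) addresses a point the paper leaves implicit and is sound; just note that the boundary contribution at $k=0$ need not vanish and instead produces the inhomogeneous right-hand sides visible in the paper's examples \eqref{rec31}, \eqref{rec41} and \eqref{recJm52x}, which is harmless since an inhomogeneous linear $q$-difference equation can be homogenized at the cost of one extra order.
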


\begin{proof}
  The first part follows from the closure properties of $q$-holonomic functions
  explained in detail in~\cite{GL:survey}, and the fact that
  the colored Jones polynomial (and hence,
  the sequence of Habiro polynomials) is $q$-holonomic~\cite{GL:qholonomic} and 
  $c_{n,k}(q)$ and $g_{k,n}(q)$ are $q$-holonomic (in fact $q$-proper hypergeometric)
  functions. Using a computer implementation given by
  Koutschan~\cite{Koutschan, Koutschan:HF}, we can obtain a linear $q$-difference
  equation for $\DK^{(m)}(q)$ given one for $\H_k(q)$. 
  The obtained linear $q$-difference equations may not be of the smallest order.
\end{proof}

The above proposition implies that the annihilating ideal of the 3-variable invariant
$\DJ^{(m)}(x,q)$ is a knot invariant, and so is a minimal recursion of 
$\DJ^{(m)}(x,q)$ with respect to $x$ or $m$. Moreover, a minimal recursion with
respect to $m$ can be computed from a recursion for the Habiro polynomials $\H_k$
as well as some of their initial values. This is illustrated in
Section~\ref{sec.examples} below.

The annihilating polynomial of $\DJ^{(m)}(x,q)$ with respect to $m$ has an 
excess degree whose relation to the character variety of $\SL_2(\BC)$-representations
is a challenging question discussed in Section~\ref{sec.examples} below.


To effectively apply Proposition~\ref{prop.qholo} we need to know a recursion for
the Habiro
polynomials of a knot. Such a recursion can be obtained from a $q$-hypergeometric
formula for the Habiro polynomials, as was done in~\cite{GS:Cpoly} for all twist
knots using a formula for the Habiro polynomials given in~\cite{Ha,Masbaum}.
Alternatively, one can use a recursion for $\J_n(q)$, Equation~\eqref{HJ}
and the methods of~\cite{Koutschan:HF} to obtain a recursion for the Habiro polynomials.
Although the above mentioned computer implementation in theory succeeds (and does
so with a certified computation), in practice it may not terminate. As a concrete
example, the colored  Jones polynomials of the $(-2,3,7)$ pretzel knot satisfy a
guessed inhomogeneous sixth order $q$-difference equation~\cite{GK:pretzel}, but
a direct attempt to find a recursion for its Habiro polynomials did not terminate.
Instead, one can use the recursion for the colored  Jones to compute several thousand
values of them (modulo a prime) and Equation~\eqref{HJ} to do the same for the Habiro
polynomials, and with some hints for the shape of the Newton polygon to guess a
recursion for the Habiro polynomials. This is explained in detail for the
$(-2,3,7)$ pretzel knot and for several other knots in forthcoming
work~\cite{GK:237habiro}.


Let us comment on the chirality properties of the descendant colored Jones
functions. It is well-known that the Jones polynomial (and its colored  versions)
are chiral invariants of knots. Explicitly, if $K^*$ denotes the mirror of a knot
$K$, then $\J_n^{K^*}(q)=\J_n(q^{-1})$ which implies that $\H_k^{K^*}(q)=\H_k(q^{-1})$
and $\DJ^{K^*,(m)}(x,q)=\DJ^{K,(-m)}(x^{-1},q^{-1})$. 

\subsection{A conjecture}

We end this section with a conjecture. Let us denote by $r+1$ the size of the
matrix $\bJ$. 

\begin{conjecture}
  \label{conj.1}
  The top row of the matrix $\bJ$ of the conjectured knot invariants~\cite{GZ:kashaev}
  is a rational linear combination of $r+1$ consecutive
  terms of the descendant invariant $\DK^{(m)}(q)$.
\end{conjecture}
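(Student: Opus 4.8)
The strategy I would follow is to exploit the $q$-holonomicity established in Proposition~\ref{prop.qholo} together with the conjectural structure of $\bJ$ as a fundamental matrix solution of a $q$-holonomic module. Both objects in the statement are governed by the same linear $q$-difference data: on one side, Proposition~\ref{prop.qholo} provides an inhomogeneous recursion in $m$ for the sequence $\DK^{(m)}(q)$, whose coefficients lie in $\BZ[q^\pm]$; on the other side, the $\s_0$-row of $\bJ$ is expected to satisfy an inhomogeneous recursion, while the remaining rows satisfy the associated homogeneous recursion of the same module. The first step is therefore to make the recursion for $\DK^{(m)}(q)$ explicit, reading it off from a recursion for the Habiro polynomials $\H_k(q)$ through the discrete Fourier transform in Equation~\eqref{Jm}, and to verify that its order equals $r$, so that it relates exactly $r+1$ consecutive terms $\DK^{(m)}(q),\dots,\DK^{(m+r)}(q)$.

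Next I would set up the companion-matrix formalism. Writing the recursion for $\DK^{(m)}(q)$ in first-order form, the state vector $\big(\DK^{(m_0)}(q),\dots,\DK^{(m_0+r)}(q)\big)$ of $r+1$ consecutive terms encodes a full set of initial data for the module and hence spans its affine solution space. The conjectural property that $\bJ$ is a fundamental solution of precisely this module then forces the entries of the top row to lie in the same solution space; consequently there is an $(r+1)\times(r+1)$ transition matrix, with entries rational in $q$, carrying the state vector to the top row. Inverting this matrix over $\BQ(q)$ — its invertibility being the genericity statement that the relevant Wronskian-type determinant of the $r+1$ consecutive descendants is nonzero — expresses each entry of the $\s_0$-row as a rational linear combination of $\DK^{(m_0)}(q),\dots,\DK^{(m_0+r)}(q)$. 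Matching the normalization dictated by the $\s_0$-column $(1,0,\dots,0)^t$ together with the identification of the $(\s_0,\s_1)$-entry with the Kashaev invariant $\DK^{(0)}(q)$ then pins down the shift $m_0$ and the coefficients, and the asserted rationality follows because the entire linear algebra is carried out over $\BQ(q)$ with integral recursion coefficients.

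The principal obstacle is that this is genuinely a conjecture: the matrix $\bJ$ is itself only conjecturally defined through the Refined Quantum Modularity Conjecture of~\cite{GZ:kashaev}, so any argument must at present be conditional on the existence of $\bJ$ with its predicted analytic and $q$-holonomic properties. Even granting this, the heart of the matter is the identification of the two recursions — showing that the inhomogeneous $q$-difference equation satisfied by $\DK^{(m)}(q)$ is the very one that governs the $\s_0$-row of $\bJ$. This is not a formal consequence of $q$-holonomicity alone; it encodes the expected coincidence of the holonomic module underlying the descendant Kashaev invariant with the one underlying the quantum modularity matrix, which sits at the depth of the full conjecture. A realistic first target is therefore verification in examples: using the explicit matrices computed for $4_1$ and $5_2$ in~\cite[Sec.~7.1,7.2]{GZ:kashaev} and the Habiro polynomials of these knots, one would compute $\DK^{(m)}(q)$ from Equation~\eqref{Jm}, guess the integer $m_0$ and the rational coefficients, and confirm the claimed linear relation to high order, thereby pinning down the precise transition matrix before attempting a general argument.
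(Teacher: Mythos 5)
The statement you are asked about is a \emph{conjecture}, and the paper offers no proof of it: its only support is experimental. For $4_1$ the paper observes that the descendants at $x=1$ coincide, after a suitable $\BQ[q^\pm]$-linear combination, with the top row of the experimentally found matrix of~\cite{GZ:kashaev}, and for $5_2$ it computes spans of descendants and compares two $q$-holonomic modules. Your proposal is likewise not a proof but a conditional reduction plus numerics, and the circularity you yourself flag is the essential point: identifying the $q$-holonomic module of $\DK^{(m)}(q)$ with the one conjecturally governing the $\s_0$-row of $\bJ$ is not a technical lemma, it \emph{is} the conjecture. In that respect your posture (conditional framework plus verification on $4_1$ and $5_2$) matches the paper's.

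There is, however, one concrete step in your plan that the paper's own computations contradict: you propose to ``verify that the order of the recursion for $\DK^{(m)}(q)$ equals $r$, so that it relates exactly $r+1$ consecutive terms.'' This fails in general. For $5_2$ the trace field is cubic, so $r=3$, but the minimal inhomogeneous recursion in $m$, Equation~\eqref{recJm52x}, is of order $5$; for twist knots $K_p$ the order of the $B$-polynomial is $3p-1$ (resp.\ $3|p|-1$), which exceeds $\deg A=2p-1$ (resp.\ $2|p|$) by the excess degree $\delta$ of Equation~\eqref{degKp}. Consequently your companion-matrix/Wronskian construction produces an expression of the top row in terms of $\deg(B)+1$ consecutive descendants, not $r+1$, and the ``transition matrix'' you want to invert is not square. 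The mechanism that repairs the count --- and the reason the conjecture can be phrased with only $r+1$ consecutive terms --- is not the recursion order but the paper's observation (b): the coefficient of $S_m^j$ in $B(S_m,Q_m,x,q)$ vanishes at $m=-\delta,\dots,-j-1$ for $j=0,\dots,\delta-1$, which collapses the $\BZ[q^\pm]$-span of all $\DJ^{(m)}(x,q)$, $m\ge 0$, down to the span of $1,\DJ^{(0)},\dots,\DJ^{(\deg A-1)}$. Any argument along your lines must replace the order count by this span-collapse property (itself only verified for twist knots, and part of what makes the conjecture nontrivial); without it your linear algebra does not close up to the asserted $r+1$ terms.
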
  

\subsection{Examples}
\label{sec.examples}

In this section we give explicit formulas for the descendant colored  Jones
polynomials and their recursions for the trefoil (a non-hyperbolic knot) and 
for the two simplest hyperbolic knots, the $4_1$ and $5_2$ knots.

For the $3_1$ knot, we have $\H_k^{3_1}(q)=(-1)^k q^{\frac{1}{2}k(k+3)}$
for all $k \geq 0$~\cite{Ha,Masbaum}. It follows from Equation~\eqref{Jmn}
that 
the descendant colored  Jones polynomials $\DJ^{3_1,(m)}(x,q)$
satisfy the inhomogeneous linear $q$-difference equation
\be
\label{rec31}
-q^{m+3} \,\DJ^{3_1,(m+2)}(x,q) + (x+x^{-1}) q^{m+2}\, \DJ^{3_1,(m+1)}(x,q)
+ (1-q^{m+1}) \, \DJ^{3_1,(m)}(x,q) =1 
\ee
for $m \in \BZ$. 
We can write the above $q$-difference equation in operator form by introducing
the operators $S_m$ and $Q_m$ which act on $f(m)$ by
$(S_m f)(m)=f(m+1)$, $(Q_m f)(m)=q^m f(m)$ and satisfy the $q$-commutation relation
$S_m Q_m = q Q_m S_m$. Then, Equation~\eqref{rec31} takes the form
$B^{3_1} \DJ^{3_1,(m)} = 1$ where
\be
B^{3_1}(S_m,Q_m,x,q)=-q^3 Q_m S_m^2 +
(x+x^{-1}) q^2 Q_m S_m + (1-q Q_m) \,.
\ee

We next consider the $4_1$ knot, whose Habiro polynomials are given by
$\H_k^{4_1}(q)=1$ for all $k \geq 0$, hence
\be
\DJ^{4_1,(m)}(x,q) = \sum_{k=0}^\infty x^{-k} (qx;q)_k (q^{-1}x;q^{-1})_k \, q^{km} \,.
\ee
When $x=1$, these coincide (after a suitable $\BQ[q^\pm]$ linear combination)
with the elements in the top row of an experimentally found matrix in~\cite{GZ:kashaev}.
The descendants satisfy the inhomogeneous linear
$q$-difference equation
\be
\label{rec41}
q^{m+1}\,\DJ^{4_1,(m+1)}(x,q) + (1-(x+x^{-1}) q^m)\, \DJ^{4_1,(m)}(x,q)+ q^{m-1}
\DJ^{4_1,(m-1)}(x,q) =1 
\ee
for $m \in \BZ$. Equation~\eqref{rec41} takes the form $B^{4_1} \DJ^{4_1,(m)} = 1$ where
\be
B^{4_1}(S_m,Q_m,x,q)=q Q_m S_m^2 + (1-(x+x^{-1}) Q_m) S_m + Q_m^{-1} S_m^{-1} \,.
\ee
Note that after setting $q=1$ and $x=1$ and $Q_m=1$ and renaming $S_m$ to be $L$, we
have $B^{4_1}(L,1,1,1)=L^2 - L + 1$ has roots generating the two embeddings
of the trace field $\BQ(\sqrt{-3})$ of the $4_1$ knot. 

A more interesting example is the $5_2$ knot, where the descendant colored  Jones
polynomials (or their evaluation at $q=1$) is a one-parameter $q$-holonomic module,
whereas the two-dimensional state-sum formula for the Kashaev invariant leads
to a two-parameter $q$-holonomic module considered in~\cite{GZ:kashaev} in relation to
the Refined Quantum Modularity Conjecture. The coincidence of the spanning space of
the two modules is no longer obvious, but appears to be true. Another feature of this
knot is that the recursion for the descendant colored  Jones polynomials has higher
degree (precisely, two more) than the degree of the recursion of the colored  Jones
polynomials, and the degree of the $A$-polynomial of the knot. This excess degree and
its relation to the character variety of $\SL_2(\BC)$-representations is
a challenging question discussed further below. 

We first recall the Habiro polynomials of the $5_2$. The latter is the twist knot
$K_2$ and its Habiro polynomials were given by Habiro~\cite{Ha}; see also
Masbaum~\cite{Masbaum} for a detailed discussion.
In fact, the Habiro polynomials of $5_2$ are given explicitly by
\be
\label{Hk52}
\H_k^{5_2}(q) = (-1)^k q^{\frac{1}{2}k(k+3)} \sum_{s=0}^k q^{s(s+1)} \binom{k}{s}_q
\ee
where $\binom{a}{b}_q = (q;q)_a/((q;q)_b (q;q)_{b-a})$ is the $q$-binomial function.
In~\cite{GS:Cpoly}, it was shown that $\H_k=\H_k^{5_2}(q)$ satisfies the linear
$q$-difference equation
\be
\label{recHk52}
\H_{k+2}^{5_2}(q) + q^{3 + k} (1 + q - q^{2 + k} + q^{4 + 2 k}) \H_{k+1}^{5_2}(q)
-q^{6 + 2 k} (-1 + q^{1 + k}) \H_k^{5_2}(q) =0, \qquad (k \geq 0)
\ee
with initial conditions $\H_k^{5_2}(q)=0$ for $k<0$ and $\H_0^{5_2}(q)=1$.
Actually, the above recursion is valid for all integers if we replace the right
hand side of it by $\delta_{k+2,0}$. This, combined with Equation~\eqref{Jmn}
and~\cite{Koutschan:HF}, gives that $\DJ^{(m)}=\DJ^{5_2,(m)}(x,q)$ satisfies the
linear $q$-difference equation
{\tiny
\begin{multline}
\label{recJm52x}
  (-1 + q^{1 + m}) (-1 + q^{2 + m}) x^2 \DJ^{(m)} - 
  q^{2 + m} (-1 + q^{2 + m}) x (1 + q + x + (1+ q) x^2) \DJ^{(1 + m)} \\
  + q^{3 + m} (q^{3 + m} +(- 1 + q^{2 + m} + q^{3 + m}) x +(- 2  - 
    q + q^{2 + m} + 2 q^{3 + m} + q^{4 + m}) x^2 +(-1 + 
    q^{2 + m} + q^{3 + m}) x^3 + q^{3 + m} x^4) \DJ^{(2 + m)} \\
    - 
 q^{4 + m} (q^{3 + m} +(-1 + q^{3 + m} + q^{4 + m}) x +(-1 + 
    q^{2 + m}  + 2 q^{3 + m}  + q^{4 + m} ) x^2 +(-1 + 
    q^{3 + m} + q^{4 + m}) x^3 + q^{3 + m} x^4) \DJ^{(3 + m)} \\
    + q^{5 + m} x (q^{3 + m} + q^{4 + m} +(-1 + q^{4 + m}) x + 
    (q^{3 + m} + q^{4 + m}) x^2) \DJ^{(4 + m)} - 
    q^{10 + 2 m} x^2 \DJ^{(5 + m)}
\\ = x (q^{2 + m} + q^{4 + m} + (1 - q^{1 + m} - 2 q^{3 + m} - 
    q^{5 + m}) x + (q^{2 + m} + q^{4 + m}) x^2) \H_0(q) + 
 q^m x (1 - x q^{-1}) (1 - q x) \H_1(q) \,.
\end{multline}
}

\noindent
Using the values $\H_0(q)=1$, $\H_1(q)=-q^2-q^4$, it follows that the
right hand side of the above recursion is $x^2$ for all $m$. 
Writing the above equation in operator form $B^{5_2} \DJ^{5_2,(m)} =x^2$,
where $B^{5_2}=B^{5_2}(S_m,Q_m,x,q)$, we obtain that
$B^{5_2}(L,1,1,1)=-L^2 (-5 + 7 L - 4 L^2 + L^3)$, and three
nonzero roots of the above polynomial generate the three embeddings of the trace
field of the $5_2$ knot. The latter is a cubic field of discriminant $-23$.

Although Equation~\eqref{recJm52x} is fifth order and inhomogeneous, we claim that
the $\BZ[q^\pm]$-module of the colored  Habiro ring generated by $\DJ^{(m)}(x,q)$
for all $m \geq 0$ equals to the module $\calM$ spanned by
$\{1,\DJ^{(0)}(x,q),\DJ^{(1)}(x,q),\DJ^{(2)}(x,q)\}$.
Indeed, since the coefficient of $\DJ^{(m)}$ (resp. $\DJ^{(m+1)}$) in~\eqref{recJm52x}
vanishes when $m=-2$ and $m=-1$, (resp. $m=-2$) the specialization of~\eqref{recJm52x}
to $m=-2$ and to $m=-1$ gives that $\DJ^{(3)}(x,q)$ (resp. $\DJ^{(4)}(x,q)$) is in
$\calM$. 
Given this, Equation~\eqref{recJm52x} then implies that $\DJ^{(m)}(x,q) \in \calM$
for all natural numbers $m$. On the other hand, $\DJ^{(m)}(x,q)$ for $m=-1$ and
$m=-2$ do not seem to lie in $\calM$. 

We now recall a second $q$-holonomic module which is motivated by
the (projection-dependent) formula for the Kashaev invariant of $5_2$
given in~\cite[Eqn.~(2.3)]{K97}

\be
\label{K52}
\J(q) = \sum_{k,l \geq 0} q^{-(k+l+1)l} \frac{(q;q)_{k+l}^2}{(q^{-1};q^{-1})_l}
= \sum_{k,l \geq 0} (-1)^l q^{-\frac{1}{2}(2k+l+1)l} \frac{(q;q)_{k+l}^2}{(q;q)_l} \,.
\ee
(Warning: the above formula is $q$ times the evaluation of the $n$-th colored  Jones
polynomial at the $n$-th root of unity.) Given this formula, we can define a 2-parameter
family of descendants (elements of the Habiro ring) by
\be
\label{K52d}
\DJ_{a,b}(q) = \sum_{k,l \geq 0} (-1)^l q^{-\frac{1}{2}(2k+l+1)l + a k + b l}
\frac{(q;q)_{k+l}^2}{(q;q)_l} \qquad (a,b \in \BZ) \,.
\ee
Then, $\DJ_{a,b}(q)$ is a $q$-holonomic module in two variables.
Experimentally, it appears that the span over $\BQ(q)$ of $\DJ_{a,b}(q)$ for
$(a,b)$ in some cone coincides with the span of $\{1,\DJ^{(0)},\DJ^{(1)},\DJ^{(2)}\}$.
For instance, we have:
\begin{align*}
\DJ_{1,0} &= (3 - q^{-1}) \DJ^{( 0)} + (1 - 3 q) \DJ^{( 1)} + q^2 \DJ^{( 2)}
\\
\DJ_{-1,0} &= 3 q \DJ^{( 0)} - 3 q^2 \DJ^{( 1)} + q^3 \DJ^{( 2)}
\\
\DJ_{1,-1} &= 
(3 + q^{-2} - q^{-1}) \DJ^{( 0)} + (1 - q^{-1} - 3 q) \DJ^{( 1)} + 
 q^2 \DJ^{( 2)} -q^{-2} + q^{-1} 
\\
\DJ_{0,-1} &=
-\DJ^{( 0)} + q \DJ^{( 1)} + 1
\\
\DJ_{-1,-1} &=
2 q \DJ^{( 0)} - q^2 \DJ^{( 1)} \,.
\end{align*}


In the examples given above for the $3_1$, $4_1$ and $5_2$ knots, we observed two
phenomena for the annihilating polynomial $B(S_m,Q_m,x,q)$ with respect to $m$ of
the descendant colored  Jones polynomials, namely:
\begin{itemize}
\item[(a)]
  The degree of $B(S_m,Q_m,x,q)$ is greater than or equal to the degree of
  the $A$-polynomial of a knot~\cite{CCGLS} (which conjecturally coincides with
  the degree of the minimal inhomogeneous recursion of the colored  Jones polynomials).
  Let $\delta$ denote this difference.
\item[(b)]
  $B(S_m,Q_m,x,q)$ is monic and the coefficient of $S_m^j$ in $B(S_m,Q_m,x,q)$
  vanishes when $m=-\delta,-\delta+1,\dots, -j-1$ for $j=0,\dots,\delta-1$. This
  implies that the $\BZ[q^\pm]$-span of $\DJ^{(m)}(x,q)$ for $m \geq 0$ equals
  to the span of $\DJ^{(m)}(x,q)$ for $m=0,\dots,\text{deg}(A)-1$.
\end{itemize}


The above observations hold for all twist knots $K_p$ considered
in~\cite[Fig.~3]{Masbaum}, where $K_1=3_1$, $K_{-1}=4_1$ and $K_2=5_2$. Their
$A$-polynomials were computed by Hoste--Shanahan~\cite{Hoste-Shanahan:A} and the
$\widehat A$ polynomials (i.e. a minimal recursion for the colored  Jones polynomials)
was computed in~\cite{Le:Atwist}. The
 recursions for the Habiro polynomials of the twist knots was given
in~\cite{GS:Cpoly}, and, using this, one can obtain recursions for the descendant
colored  Jones polynomials with respect to $m$. An explicit computation shows that
the $B$-polynomials of twist knots satisfy the above mentioned two properties.
Moreover, the degrees of the $A$-polynomials, the
$B$-polynomials and the excess numbers of twist knots are given by
\be
\label{degKp}
\begin{aligned}
\deg(A_{K_p}) &=  2p-1, & \deg(B_{K_p}) &=3p-1, &
\delta(K_p) =&p, & (p>0) \\
\deg(A_{K_p}) &=  2|p|, & \deg(B_{K_p}) &=3|p|-1, &
\delta(K_p) =&|p|-1 & (p<0) \,.
\end{aligned}
\ee
We mention parenthetically that the degree of the $A$-polynomials of the twist
knots coincide with the degrees of their trace fields~\cite{Hoste-Shanahan:trace}.
The excess degree of a knot and its relation to quantum invariants, their
asymptotics, and their connection with the work of Gukov et al~\cite{Gukov-Manolescu}
is studied in detail by Campbell Wheeler in his upcoming thesis~\cite{Wheeler:thesis}
and in forthcoming work~\cite{GGMW}.


\section{The colored  Jones polynomials at roots of unity}
\label{sec.cjN}

This section fills a historical gap where we discuss a previously unnoticed conjectural
relationship between the colored Jones polynomials and some quantum R-matrices at
roots of unity considered by the second author in~\cite{Kashaev:algebraic}. Namely,
we show that the R-matrices over the vector space $\BC^N$ depending on a primitive
$N$-th root of unity $\z$ and indexed by an integer parameter $0< m<N$ actually give
knot invariants which conjecturally coincide with the $(m+1)$-colored Jones polynomial
evaluated at  $q=\z$. In particular, we show that a specific choice of the
discrete parameter $m=N-1$ gives the Kashaev invariant $\langle K \rangle_N$ of a knot,
which, in its turn, appears to be equal to the $N$-colored Jones polynomial
$J_N(\z)$~\cite{MM}. The other choices of $m$ lead to knot invariants
$\langle K \rangle_{N,m}$ for $m\in \{0,\dots, N-1\}$ (or even links, with each
component colored by its own integer parameter).
 
In~\cite{K94}, the second author assigned a state-sum to a root of unity and to
a pair consisting of a triangulated compact closed oriented 3-manifold and a link
represented by a Hamiltonian sub-complex, and showed that the state-sum is invariant
under some natural Pachner moves. The state-sum depended on a tetrahedral weight
function, from which one can obtain an R-matrix that satisfies the Yang--Baxter
equation. This led to an invariant of knots in the 3-sphere that depends to a root
of unity, where the knot in question is a Hamiltonian path of a triangulation of
the 3-sphere. This is how the R-matrix
of~\cite{K95} was found and explained (without pictures), see~\cite[Sec.~4]{K95}.
The R-matrices of~\cite{Kashaev:algebraic}, which depend on discrete and continuous
spectral parameters, are expected to generalize  the R-matrix of~\cite{K95}.
The continuous spectral parameter apparently plays no role in constructing knot
invariants, while the discrete spectral parameter seems to take care of the color
in the colored Jones polynomials. 

We should remark that for all values of the colors, this R-matrix
of~\cite{Kashaev:algebraic} is over the same vector space $\BC^N$ and appears
nontrivial even when the color is trivial. This is in contrast to the standard
(quantum group) definition of the colored Jones polynomials, where the size of the
R-matrix depends on the color and is trivial for the trivial color.

\subsection{R-matrices with a spectral parameter}
\label{sub.R}

We begin our discussion of R-matrices by introducing some useful notation. 
For a positive integer $n\in\BZ_{>0}$, we define 
\begin{equation}
  \label{double-angle-brackets}
  (x)_n:=\prod_{k=1}^n(1-x^k), \qquad
  \langle\!\langle x\rangle\!\rangle_n:=\frac{1-x^n}{(1-x)n} 
\end{equation}
with the convention $(x)_0:=1$.

For a fixed root of unity $\z$ of order $N=\operatorname{ord}(\z)\in\BZ_{>0}$,
we define rational functions $w(x|n)\in \BC(x)$ for all integers
$n\in\BZ$ by the recurrence relation
\begin{equation}
  \label{recurrence-formula-w}
w(x|n)(1-x\z^n)=w(x|n-1),\quad w(x| 0)=1.
\end{equation}
One has the addition property
\begin{equation}\label{addition-formula-w}
w(x|m+n)=w(x|m)w(x\z^m|n)\qquad (m,n\in\BZ)
\end{equation}
which, by taking into account the equality $w(x|N)=1/(1-x^N)$,
gives the quasi-periodicity property
\begin{equation}
(1-x^N)w(x|n+N)=w(x|n)\qquad (n\in\BZ).
\end{equation}
Notice the relation to the $q$-Pochhammer symbols with $q=\z$
\begin{equation}
w(x|n)=\frac1{(x\z;\z)_n}\qquad (n\in\BZ_{\ge0}).
\end{equation}

It follows that $w(x|n)$ is a rational function of $x$ with coefficients in the
cyclotomic field $F_N:=\BQ(\z)$ whose poles and zeros (if any) are integer
powers of $\z$. We will call such rational functions, and more generally matrices
of such rational functions, $N$-cyclotomic. 

In~\cite{Kashaev:algebraic}, the second author considered the $N$-cyclotomic
matrix $r(x;m,n) \in \operatorname{Mat}_{N^2}(\BC(x))$
defined by the formula
\begin{equation}
  \label{r-matrix-coeff-diff-form}
  \langle i,j|r(x;m,n)|k,l\rangle=
  \langle\!\langle x\rangle\!\rangle_N\z^{(i-k+n)(l-j)}
  \frac{w(x/\z|j-i-m)w(x|l-k+n)}{w(x/\z |j-k+n-m)w(x/\z |l-i)}
\end{equation}
(where the left hand-side is the bra-ket notation for the entries of a matrix) 
and proved that it satisfies the Yang--Baxter equation
\begin{multline}
  \label{YB}
r_{1,2}(x;m_1,m_2)r_{1,3}(xy;m_1,m_3)r_{2,3}(y;m_2,m_3)\\
=r_{2,3}(y;m_2,m_3)r_{1,3}(xy;m_1,m_3)r_{1,2}(x;m_1,m_2)
\end{multline}
where, along with continuous spectral parameters $x$ and $y$, the integers
$m_1,m_2,m_3$ play the role of discrete spectral parameters. Here, $r_{1,2}$,
$r_{1,3}$ and $r_{2,3}$ denote the $N$-cyclotomic matrices in
$\operatorname{Mat}_{N^3}(\BC(x))$ obtained by interpreting $r$ as an endomorphism
of an $N$-dimensional space $V^{\otimes 2}$, equipped with a basis, and $r_{i,j}$ as the
corresponding endomorphisms of $V^{\otimes 3}$, acting nontrivially on the $i,j$ copies
of $V$ in $V^{\otimes 3}$ and trivially on the remaining copy.

Aside from being a solution to the Yang--Baxter equation with a spectral parameter,
a somehow surprising fact is that $r(x;m,n)$ is regular (i.e. it has a finite limit)
at $x=1$. This follows from the existence of the gauge conjugating matrix
$h(y,m) \in \operatorname{Mat}_{N}(\BC(y))$ defined by 
\begin{equation}
\langle i|h(y,m)|j\rangle=\z^{(j-i)m}\langle\!\langle y \,\z^{j-i}\rangle\!\rangle_N \,.
\end{equation}
The entries of the matrix $h(y,m)$ are, in fact, polynomials in $y$, it
satisfies the multiplicative property
\begin{equation}
\label{h1}
  h(x,m)h(y,m)=h(xy,m) 
\end{equation}
and it enters the important gauge symmetry transformation formulae for the
r-matrix~\eqref{r-matrix-coeff-diff-form}:
\begin{equation}
\label{h2}  
h_1(y,m+1)r(x;m,n)h_1(y^{-1},m+1)=r(xy;m,n)=
h_2(y^{-1},0)r(x;m,n)h_2(y,0) \,.
\end{equation}
Here, $h_1$ and $h_2$ denote matrices in $\operatorname{Mat}_{N^2}(\BC(y))$
obtained by interpreting $h$ as an endomorphism of an $N$ dimensional space $V$
with a basis, and $h_j$ as corresponding endomorphisms of $V^{\otimes 2}$
obtained by acting on the $j$-th copy of $V$ in $V^{\otimes 2}$ by $h$ and on
the remaining copy of $V$ by identity. 

The above gauge symmetry transformation formulae, together with the fact that
$h(y,m)$ has polynomial entries in $y$, imply easily that $r(x;m,n)$
is regular at $x=1$. An additional calculation shows that 
\begin{equation}
\label{r1}
  \langle i,j|r(1;m,n)|k,l\rangle=V_{i,j-m,k-n,l}(\z)\z^{k-l-n+(k-i-n)m}
\end{equation}
where
\begin{equation}
  V_{i,j,k,l}(\z):=\frac{N\hev{N}{\prin{N}{j-i-1}+\prin{N}{l-k}}
    \hev{N}{\prin{N}{i-l}+\prin{N}{k-j}}}{
    (\bar\z)_{\prin{N}{j-i-1}}(\z)_{\prin{N}{i-l}}
    (\bar\z)_{\prin{N}{l-k}}(\z)_{\prin{N}{k-j}}} \,
\end{equation}
and where $\bar\z$ denotes the complex conjugate of the complex number $\z$
and for an integer $k$, $\prin{N}{k}$ denotes the unique integer such that
\begin{equation}
\prin{N}{k}\equiv k\pmod N, \quad 0\le \prin{N}{k}<N \,,
\end{equation}
and
\begin{equation}
  \hev{N}{k}:=\delta_{k,\prin{N}{k}}
\end{equation}
where $\delta$ is the standard Kronecker delta symbol.
The regularity of the matrix $r(x;m,n)$ is reminiscent to the $p$-adic valuation
of factorials in Landau's work, see for instance~\cite{RV,Sounda} and references
therein. 

A consequence of formula~\eqref{r1} is that the rows and columns of $r(1;m,n)$
are naturally indexed by elements of the additive group $\BZ/N\BZ$, as well as
the discrete spectral parameters are in $\BZ/N\BZ$. 

\subsection{Proof of the gauge symmetry equations}
\label{sub.proofh1h2}

In this section we give the proofs of equations~\eqref{h1}
and~\eqref{h2} which were stated in~\cite{Kashaev:algebraic}, but proofs
were ommitted.

We begin with Equation~\eqref{h1}. Writing out its left hand side in matrix
coefficients (indexed by $i,j\in\{0,\ldots,N-1\}$), we have
\begin{equation}
\langle i|h(x,m)h(y,m)|j\rangle=\sum_{k=0}^{N-1}
\langle i|h(x,m)|k\rangle\langle k|h(y,m)|j\rangle
=\z^{(j-i)m} \sum_{k=0}^{N-1}
\langle\!\langle x\z^{k-i}\rangle\!\rangle_N\langle\!\langle y\z^{j-k}\rangle\!\rangle_N
\end{equation}
so that Equation~\eqref{h1} is equivalent to
\begin{equation}
\sum_{k=0}^{N-1}
\langle\!\langle x\z^{k-i}\rangle\!\rangle_N\langle\!\langle y\z^{j-k}\rangle\!\rangle_N
=\langle\!\langle xy\z^{j-i}\rangle\!\rangle_N\Leftrightarrow
\sum_{k=0}^{N-1}
\langle\!\langle x\z^{k}\rangle\!\rangle_N\langle\!\langle y\z^{j-k}\rangle\!\rangle_N
=\langle\!\langle xy\z^{j}\rangle\!\rangle_N \,.
\end{equation}
The definition~\eqref{double-angle-brackets} gives 
$\langle\!\langle x\rangle\!\rangle_N=N^{-1}\sum_{a=0}^{N-1}x^a$.
Using this equality, we obtain
\begin{equation}
\begin{aligned}
\sum_{k=0}^{N-1}
\langle\!\langle x\z^{k}\rangle\!\rangle_N\langle\!\langle y\z^{j-k}\rangle\!\rangle_N
& =N^{-2}\sum_{k,a,b=0}^{N-1}(x\z^k)^a(y\z^{j-k})^b
=N^{-1}\sum_{a,b=0}^{N-1}x^ay^b\z^{jb}\delta_{0,\prin{N}{a-b}}
\\ & =N^{-1}\sum_{b=0}^{N-1}(xy\z^j)^b=\langle\!\langle xy\z^{j}\rangle\!\rangle_N \,.
\end{aligned}
\end{equation}

We can diagonalize the matrices $h(x,m)$ by conjugating them by the (discrete)
Fourier transformation operator
$
\langle i| F|j\rangle=\z^{i j}.
$
Indeed, assuming $i,j\in\{0,\ldots,N-1\}$, we have
\begin{equation}
\label{fourier-trans-of-h}
\begin{aligned}
\langle i|Fh(x,m)F^{-1}|j\rangle &=
N^{-1}\sum_{i',j'}\z^{i i'-jj'}\langle i'|h(x,m)|j'\rangle
=N^{-1}\sum_{i',j'}\z^{i i'-jj'+(j'-i')m}\langle\!\langle x\z^{j'-i'}\rangle\!\rangle_N
\\ &
=N^{-1}\sum_{i',j'}\z^{i i'-j(j'+i')+j'm}\langle\!\langle x\z^{j'}\rangle\!\rangle_N
=\delta_{i,j}\sum_{j'}\z^{(m-j)j'}\langle\!\langle x\z^{j'}\rangle\!\rangle_N
\\ &
=N^{-1}\delta_{i,j}\sum_{j'}\z^{(m-j)j'}\sum_{a=0}^{N-1} x^a\z^{aj'}
=\delta_{i,j}\sum_{a=0}^{N-1}\delta_{0,\prin{N}{m-j+a}} x^a=\delta_{i,j}x^{\prin{N}{j-m}} \,.
\end{aligned}
\end{equation}
Thus, conjugating the r-matrix~\eqref{r-matrix-coeff-diff-form} by $F$, we can prove
Equations~\eqref{h2} by explicit computation. Indeed, assuming that
$i,j,k,l\in\{0,\dots,N-1\}$, we have
\begin{multline}
\label{fourier-trans-r-mat1}
\langle i,j\vert (F\otimes F)r(x;m,n)(F^{-1}\otimes F^{-1})|k,l\rangle
=N^{-2}\sum_{i',j',k',l'}\z^{ii'+jj'-kk'-ll'} \langle i',j'|r(x;m,n)|k',l'\rangle
\\
=\frac{ \langle\!\langle x\rangle\!\rangle_N}{N^{2}}
\sum_{i',j',k',l'}\z^{ii'+jj'-kk'-ll'+(i'-k'+n)(l'-j')}
\frac{w(x/\z|j'-i'-m)w(x|l'-k'+n)}{w(x/\z |j'-k'+n-m)w(x/\z |l'-i')}
\\
=\frac{ \langle\!\langle x\rangle\!\rangle_N}{N^{2}}
\sum_{i',j',k',l'}\z^{i(i'+k')+j(j'+k')-kk'-l(l'+k')+(i'+n)(l'-j')}
\frac{w(x/\z|j'-i'-m)w(x|l'+n)}{w(x/\z |j'+n-m)w(x/\z |l'-i')}
\\
=\frac{ \langle\!\langle x\rangle\!\rangle_N\delta_{0,\prin{N}{i+j-k-l}}}{N}
\sum_{i',j',l'}\z^{ii'+jj'-ll'+(i'+n)(l'-j')}
\frac{w(x/\z|j'-i'-m)w(x|l'+n)}{w(x/\z |j'+n-m)w(x/\z |l'-i')}
\end{multline}
where, in third equality, we have shifted the summation variables $i',j',l'$ by
$k'$ and, in the last equality, performed the $k'$-summation by using the
formula
\begin{equation}
\sum_{a=0}^{N-1}\z^{ab}=N\delta_{0,\prin{N}{b}}.
\end{equation}
We continue the calculation in~\eqref{fourier-trans-r-mat1} by shifting the
summation variables $l'\mapsto l'+i'$ and $j'\mapsto j'+m-n$ followed by the
shift $i'\mapsto i'-n$:
\begin{equation}
\label{fourier-trans-r-mat2}  
\begin{aligned}
\eqref{fourier-trans-r-mat1} 
&=\frac{ \langle\!\langle x\rangle\!\rangle_N\delta_{0,\prin{N}{i+j-k-l}}}{N}
\sum_{i',j',l'}\z^{i(i'-n)+j(j'+m-n)-l(l'+i'-n)+i'(l'+i'-j'-m)}
\frac{w(x/\z|j'-i')w(x|l'+i')}{w(x/\z |j')w(x/\z |l')}
\\ &
=\frac{ \langle\!\langle x\rangle\!\rangle_N\delta_{0,\prin{N}{i+j-k-l}}}{N\z^{kn-jm}}
\sum_{i',j',l'}\z^{(i -l-m+i')i' + (j-i' ) j' + (i' - l) l' }
\frac{w(x/\z|j'-i')w(x|l'+i')}{w(x/\z |j')w(x/\z |l')}
\\ &
=\frac{ \langle\!\langle x\rangle\!\rangle_N\delta_{0,\prin{N}{i+j-k-l}}}{N\z^{kn-jm}}
\sum_{i'=0}^{N-1}\frac{w(x/\z|-i')w(x|i')}{\z^{(l+m-i-i')i' }}
\sum_{j',l'}\z^{ (j-i' ) j' + (i' - l) l' }
\frac{w(x\z^{-i'-1}|j')w(x\z^{i'}|l')}{w(x/\z |j')w(x/\z |l')}
\\ &
=\frac{ \langle\!\langle x\rangle\!\rangle_N\delta_{0,\prin{N}{i+j-k-l}}}{N\z^{kn-jm}}
\sum_{i'=0}^{N-1}\frac{w(x/\z|-i')w(x|i')}{\z^{(l+m-i-i')i' }}
f(x\z^{-i'-1},x/\z|\z^{j-i'})f(x\z^{i'},x/\z|\z^{i'-l})
\end{aligned}
\end{equation} 
where, in  third equality, we used the addition formula~\eqref{addition-formula-w},
and in the last equality, we use function $f(x,y|z)$ defined by 
\begin{equation}
f(x,y|z):=\sum_{a}\frac{w(x|a)}{w(y|a)}z^a,\qquad  (1-y^N)z^N=1-x^N,
\end{equation}
whose automorphic properties are described in~\cite{KMS}. In particular, we have
the equality
\begin{equation}
f(x\z^a,x/\z|\z^{-b})
=\frac{x^{\prin{N}{b}}}{\langle\!\langle x\rangle\!\rangle_Nw(x|\prin{N}{a})}
\frac{(\z)_{\prin{N}{a}+\prin{N}{b}}}{(\z)_{\prin{N}{a}} (\z)_{\prin{N}{b}}}\qquad (a,b\in\BZ)
\end{equation}
which we can use to proceed in~\eqref{fourier-trans-r-mat2} as follows:
\begin{equation}
\label{fourier-trans-r-ma3}
\begin{aligned}
\eqref{fourier-trans-r-mat2}
&=\frac{\langle\!\langle x\rangle\!\rangle_N\delta_{0,\prin{N}{i+j-k-l}}}{
  \z^{kn-jm}}\sum_{i'=j}^{N-1}\frac{w(x|i')x^{i'-j}}{\z^{(l+m-i-i')i' }}
\frac{(\z)_{N-1-j}}{(\z)_{N-1-i'}(\z)_{i'-j}} f(x\z^{i'},x/\z|\z^{i'-l})
\\ &
=\frac{\langle\!\langle x\rangle\!\rangle_N\delta_{0,\prin{N}{i+j-k-l}}}{\z^{kn-jm}}
\sum_{i'=j}^{N-1}\frac{w(x|i')x^{i'-j}}{\z^{(l+m-i-i')i' }}
\frac{(\bar\z)_{i'}}{(\bar\z)_{j}(\z)_{i'-j}} f(x\z^{i'},x/\z|\z^{i'-l})
\end{aligned}
\end{equation}
where, in first equality, we used the addition formula~\eqref{addition-formula-w}
for simplification, and in the last equality, the property 
\begin{equation}
(\z)_k(\bar\z)_{N-1-k}=N\qquad (k\in\BZ,\,\,0\le k\le N-1).
\end{equation}
Continuing with the second $f$-function in~\eqref{fourier-trans-r-ma3},
we have
\begin{equation}
\label{fourier-trans-r-ma4}
\begin{aligned}
&=\frac{\delta_{0,\prin{N}{i+j-k-l}}}{\z^{kn-jm}}
\sum_{i'=j}^{l}\frac{x^{l-j}}{\z^{(l+m-i-i')i' }}
\frac{(\bar\z)_{i'}}{(\bar\z)_{j}(\z)_{i'-j}}\frac{(\z)_{l}}{(\z)_{i'} (\z)_{l-i'}}
\\ &
=\frac{\delta_{0,\prin{N}{i+j-k-l}}(\z)_{l}x^{l}}{\z^{(n-j)k}(\z)_{j}x^{j}}
\sum_{i'=0}^{l-j}\frac{(-1)^{i'}\z^{(i'-1)i'/2}}{\z^{(l+m-i-j)i' }(\z)_{i'}(\z)_{l-j-i'}}
\\ &
=\frac{\delta_{0,\prin{N}{i+j-k-l}}(\z)_{l}x^{l}(\z^{k-m};\z)_{l-j}}{
  \z^{(n-j)k}(\z)_{j}x^{j}(\z)_{l-j}}
=\frac{\delta_{0,\prin{N}{i+j-k-l}}(\z)_{l}x^{l}(\z)_{\prin{N}{k-m-1}+l-j}}{
  \z^{(n-j)k}(\z)_{j}x^{j}(\z)_{l-j}(\z)_{\prin{N}{k-m-1}}}
\\ &
=\delta_{\prin{N}{i-m-1}+j,\prin{N}{k-m-1}+l}\,\z^{(j-n)k}x^{l-j}
\frac{(\z)_{l}(\z)_{\prin{N}{i-m-1}}}{(\z)_{j}(\z)_{l-j}(\z)_{\prin{N}{k-m-1}}}
\end{aligned}
\end{equation}
where, in third equality, we used the well-known $q$-binomial formula 
\begin{equation}
  (z;q)_s=\sum_{t=0}^s (-z)^t q^{(t-1)t/2}\frac{(q)_s}{(q)_t(q)_{s-t}},
  \qquad (s\in\BN),
\end{equation}
with $q=\z$, $s=l-j$ and $z=\z^{k-m}$; in fourth equality, the identity
\begin{equation}
(\z^a;\z)_b=\frac{(\z)_{\prin{N}{a-1}+b}}{(\z)_{\prin{N}{a-1}}}\qquad
(a \in \BZ, \, b \in \BN),
\end{equation}
and,  in the last equality, the equivalence
\begin{equation}
  \begin{cases} i+j\equiv k+l\pmod N\ \\ 0\le \prin{N}{k-m-1}+l-j\le N-1
    \end{cases} 
\!\! \Leftrightarrow \,\, \prin{N}{i-m-1}+j=\prin{N}{k-m-1}+l \,.
\end{equation}
Equation~\eqref{fourier-trans-r-ma4}, combined with~\eqref{fourier-trans-of-h},
straightforwardly implies relations~\eqref{h2}.

We remark that the R-matrix coefficients in \eqref{fourier-trans-r-ma4},
when specialized to $m=n=-1$,
coincide with the standard colored Jones R-matrix coefficients.  This means that
in this case, the associated knot invariant is the $N$-colored Jones polynomial
specialized to a primitive $N$-th root of unity.   

\subsection{Knot invariants}
\label{sub.knotinv}

We can use the r-matrix given in Equation~\eqref{r1} to construct knot invariants
from their planar projections. This is a standard construction explained in several
places that include~\cite{Jones:statistical, RT:ribbon, Tu:YB, Tu:book}. We will
follow the presentation of~\cite{Ka:longknots} which does not require neither the
theory of Hopf algebras nor the existence of ribbon elements, and uses as a
combinatorial input a generic planar projection of a long knot with no local
extrema oriented from left to right and equal numbers of positive and negative
crossings. Locally, such a planar projection has eight types of crossings
(four positive and four negative ones) 
\begin{equation}
\label{8x}
\begin{tikzpicture}[scale=.5,baseline]
\draw[thick,<-] (0,1) to [out=-90,in=90] (1,0);
\draw[line width=3pt,white] (1,1) to [out=-90,in=90] (0,0);
\draw[thick,<-] (1,1) to [out=-90,in=90] (0,0);
\end{tikzpicture}
\qquad
\begin{tikzpicture}[scale=.5,baseline]
\draw[thick,<-] (1,1) to [out=-90,in=90] (0,0);
\draw[line width=3pt,white] (0,1) to [out=-90,in=90] (1,0);
\draw[thick,<-] (0,1) to [out=-90,in=90] (1,0);
\end{tikzpicture}
\qquad
\begin{tikzpicture}[scale=.5,baseline]
\draw[thick,->] (1,1) to [out=-90,in=90] (0,0);
\draw[line width=3pt,white] (0,1) to [out=-90,in=90] (1,0);
\draw[thick,<-] (0,1) to [out=-90,in=90] (1,0);
\end{tikzpicture}
\qquad
\begin{tikzpicture}[scale=.5,baseline]
\draw[thick,<-] (0,1) to [out=-90,in=90] (1,0);
\draw[line width=3pt,white] (1,1) to [out=-90,in=90] (0,0);
\draw[thick,->] (1,1) to [out=-90,in=90] (0,0);
\end{tikzpicture}
\qquad
\begin{tikzpicture}[scale=.5,baseline]
\draw[thick,->] (0,1) to [out=-90,in=90] (1,0);
\draw[line width=3pt,white] (1,1) to [out=-90,in=90] (0,0);
\draw[thick,->] (1,1) to [out=-90,in=90] (0,0);
\end{tikzpicture}
\qquad
\begin{tikzpicture}[scale=.5,baseline]
\draw[thick,->] (1,1) to [out=-90,in=90] (0,0);
\draw[line width=3pt,white] (0,1) to [out=-90,in=90] (1,0);
\draw[thick,->] (0,1) to [out=-90,in=90] (1,0);
\end{tikzpicture}
\qquad
\begin{tikzpicture}[scale=.5,baseline]
\draw[thick,<-] (1,1) to [out=-90,in=90] (0,0);
\draw[line width=3pt,white] (0,1) to [out=-90,in=90] (1,0);
\draw[thick,->] (0,1) to [out=-90,in=90] (1,0);
\end{tikzpicture}
\qquad
\begin{tikzpicture}[scale=.5,baseline]
\draw[thick,->] (0,1) to [out=-90,in=90] (1,0);
\draw[line width=3pt,white] (1,1) to [out=-90,in=90] (0,0);
\draw[thick,<-] (1,1) to [out=-90,in=90] (0,0);
\end{tikzpicture}
\end{equation}
to which one assigns the suitably ``rotated'' r-matrices shown in Equations (16)-(19)
of~\cite{Ka:longknots}, two types of vertical segments, and two types of local
extrema
\begin{equation}
\label{2cups}
\begin{tikzpicture}[yscale=.5,baseline]
\draw[thick,->] (0,0) to [out=90,in=-90] (0,1);
\end{tikzpicture}
\qquad
\begin{tikzpicture}[yscale=.5,baseline]
\draw[thick,<-] (0,0) to [out=90,in=-90] (0,1);
\end{tikzpicture}
\qquad
\begin{tikzpicture}[xscale=.5,baseline]
\draw[thick,<-] (0,0) to [out=90,in=90] (1,0);
\end{tikzpicture}
\qquad
\begin{tikzpicture}[xscale=.5,baseline=15]
\draw[thick,<-] (0,1) to [out=-90,in=-90] (1,1);
\end{tikzpicture}
\end{equation}
to which ones assigns respectively the identity operators and $\varepsilon$ and
$\eta$ maps of Equation (7) of~\cite{Ka:longknots}. In our case, we fix a primitive
root of unity $\z$ of order $N$, a (discrete spectral parameter)
$n \in \BZ/N\BZ$ and a planar projection $D$ of a long knot $K$. We 
color each arc of $D$ by an element of $\BZ/N\BZ$, place r-matrices at the crossings
(using the fixed spectral parameter at all crossings), according to the rules
described below, and sum over all indices. The result of this contraction is a
complex number that depends on the colors $i$ and $j$ of the outgoing and incoming
arcs of the diagram of the long knot. These complex numbers arrange in a matrix 
\begin{equation}
  \label{DNn}
  \langle D \rangle_{N,n} \in \operatorname{Mat}_{N}(\BZ[N^{-1},\z])
\end{equation}

The r-matrices given below are rigid, i.e., satisfy the Yang-Baxter equation (9)
and the inverse matrix equation (10) of~\cite{Ka:longknots}. The next theorem
follows from the results of~\cite{Ka:longknots}. 

\begin{theorem}
  \label{thm.KNn}
  The state-sum $\langle D \rangle_{N,n}$ depends on the long knot
  $K$ and not on the planar projection $D$ used. 
\end{theorem}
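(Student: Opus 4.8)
The plan is to recognize $\langle D\rangle_{N,n}$ as the morphism assigned to $D$ by the diagrammatic calculus of~\cite{Ka:longknots}, and then to verify that the local data---the $R$-matrix $r(1;m,n)$ of~\eqref{r1} placed at the crossings together with the duality maps $\varepsilon$ and $\eta$ placed at the cups and caps---satisfy the finite list of algebraic relations that that calculus requires for invariance. By the main result of~\cite{Ka:longknots}, once these relations hold the resulting state sum is unchanged under the oriented Reidemeister moves available within the chosen class of projections (generic, oriented left-to-right, no local extrema other than the prescribed cups and caps, and equal numbers of positive and negative crossings), and hence depends only on the long-knot type $K$. Thus the theorem reduces to checking three families of identities: the Yang--Baxter equation (equation~(9) of~\cite{Ka:longknots}), the inverse-matrix relation (equation~(10)), and the rigidity (zig-zag/rotation) relations tying $r(1;m,n)$ to its rotations through $\varepsilon$ and $\eta$.

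First I would establish the Yang--Baxter equation for $r(1;m,n)$. This is where the regularity of $r(x;m,n)$ at $x=1$, obtained in Section~\ref{sub.R} from the gauge symmetry~\eqref{h2} and the polynomial dependence of $h(y,m)$ on $y$, does the work: the spectral-parameter Yang--Baxter equation~\eqref{YB} holds for generic $(x,y)$, and since all three factors extend continuously to $x=y=1$ we may take that limit to obtain $r_{1,2}(1;m_1,m_2)\,r_{1,3}(1;m_1,m_3)\,r_{2,3}(1;m_2,m_3)=r_{2,3}(1;m_2,m_3)\,r_{1,3}(1;m_1,m_3)\,r_{1,2}(1;m_1,m_2)$, which (with all discrete parameters equal to the fixed $n$) is exactly the Reidemeister~III identity.

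Next I would verify the inverse-matrix relation, which encodes Reidemeister~II: the matrix assigned to a negative crossing must be the appropriate inverse, read through the cups and caps, of the one assigned to the corresponding positive crossing in~\eqref{8x}. I expect this to follow from the unitarity-type inversion relation satisfied by $r(x;m,n)$ as a spectral-parameter $R$-matrix, again specialized to $x=1$ using regularity, where one must be careful that the rotation performed by $\varepsilon$ and $\eta$ matches the conventions of the eight crossing pictures. Finally, the rigidity/rotation relations, which also control the Reidemeister~I (curl) behaviour---and, together with the writhe-zero normalization coming from the restriction to equal numbers of positive and negative crossings, thereby control the framing---must be checked by composing $r(1;m,n)$ with $\eta$ and $\varepsilon$ and comparing with the rotated $R$-matrices of equations~(16)--(19) of~\cite{Ka:longknots}.

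The main obstacle will be this last point: the rotation and inverse relations at the degenerate value $x=1$. The $R$-matrix $r(1;m,n)$ of~\eqref{r1} is a regularized limit rather than a generic value, so its interaction with the duality maps is not automatic from the spectral-parameter identities and must be confirmed directly; the explicit closed form~\eqref{r1} in terms of the combinatorial weights $V_{i,j,k,l}(\z)$ and the Heaviside-type symbols $\hev{N}{\cdot}$ is what makes this confirmation feasible. Once all three families of relations are in place---and the assertion that the $r$-matrices are rigid is precisely the statement that they are---the invariance theorem of~\cite{Ka:longknots} applies verbatim and yields the independence of $\langle D\rangle_{N,n}$ from the projection $D$.
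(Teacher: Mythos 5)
Your proposal takes essentially the same route as the paper: the paper likewise reduces the theorem to the assertion that the r-matrices are rigid---i.e., satisfy the Yang--Baxter equation (9) and the inverse-matrix equation (10) of~\cite{Ka:longknots}---and then invokes the invariance results of~\cite{Ka:longknots} directly. Your extra detail (obtaining the constant Yang--Baxter equation for $r(1;m,n)$ as the $x,y\to 1$ limit of~\eqref{YB}, justified by the regularity coming from the gauge symmetry~\eqref{h2}) merely fills in the verification that the paper states without proof.
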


It remains to explain the rules for the r-matrices. 

Starting  with the case with $m=n=0$, we get the following rules for the four types
of positive crossings  
\begin{equation}
\begin{tikzpicture}[scale=1,baseline=10]
\draw[thick,<-] (0,1) to [out=-90,in=90] (1,0);
\draw[line width=3pt,white] (1,1) to [out=-90,in=90] (0,0);
\draw[thick,<-] (1,1) to [out=-90,in=90] (0,0);
\node (sw) at (0,-.2){\tiny $k$};\node (se) at (1,-.2){\tiny $l$};
\node (nw) at (0,1.2){\tiny $j$};\node (ne) at (1,1.2){\tiny $i$};
\end{tikzpicture}
=
\begin{tikzpicture}[scale=1,baseline=10]
\draw[thick,->] (1,1) to [out=-90,in=90] (0,0);
\draw[line width=3pt,white] (0,1) to [out=-90,in=90] (1,0);
\draw[thick,<-] (0,1) to [out=-90,in=90] (1,0);
\node (sw) at (0,-.2){\tiny $j$};\node (se) at (1,-.2){\tiny $k$};
\node (nw) at (0,1.2){\tiny $i$};\node (ne) at (1,1.2){\tiny $l$};
\end{tikzpicture}
=
\begin{tikzpicture}[scale=1,baseline=10]
\draw[thick,->] (0,1) to [out=-90,in=90] (1,0);
\draw[line width=3pt,white] (1,1) to [out=-90,in=90] (0,0);
\draw[thick,->] (1,1) to [out=-90,in=90] (0,0);
\node (sw) at (0,-.2){\tiny $i$};\node (se) at (1,-.2){\tiny $j$};
\node (nw) at (0,1.2){\tiny $l$};\node (ne) at (1,1.2){\tiny $k$};
\end{tikzpicture}
=V_{i,j,k,l}(\z)\z^{k-l},\quad
\begin{tikzpicture}[scale=1,baseline=10]
\draw[thick,<-] (1,1) to [out=-90,in=90] (0,0);
\draw[line width=3pt,white] (0,1) to [out=-90,in=90] (1,0);
\draw[thick,->] (0,1) to [out=-90,in=90] (1,0);
\node (sw) at (0,-.2){\tiny $k$};\node (se) at (1,-.2){\tiny $l$};
\node (nw) at (0,1.2){\tiny $j$};\node (ne) at (1,1.2){\tiny $i$};
\end{tikzpicture}
=V_{k,l,i,j-1}(\bar\z)\z^{j-1-k}
\end{equation}
and for the four types of negative crossings
\begin{equation}
\begin{tikzpicture}[scale=1,baseline=10]
\draw[thick,<-] (1,1) to [out=-90,in=90] (0,0);
\draw[line width=3pt,white] (0,1) to [out=-90,in=90] (1,0);
\draw[thick,<-] (0,1) to [out=-90,in=90] (1,0);
\node (sw) at (0,-.2){\tiny $k$};\node (se) at (1,-.2){\tiny $l$};
\node (nw) at (0,1.2){\tiny $j$};\node (ne) at (1,1.2){\tiny $i$};
\end{tikzpicture}
=
\begin{tikzpicture}[scale=1,baseline=10]
\draw[thick,<-] (0,1) to [out=-90,in=90] (1,0);
\draw[line width=3pt,white] (1,1) to [out=-90,in=90] (0,0);
\draw[thick,->] (1,1) to [out=-90,in=90] (0,0);
\node (sw) at (0,-.2){\tiny $j$};\node (se) at (1,-.2){\tiny $k$};
\node (nw) at (0,1.2){\tiny $i$};\node (ne) at (1,1.2){\tiny $l$};
\end{tikzpicture}
=
\begin{tikzpicture}[scale=1,baseline=10]
\draw[thick,->] (1,1) to [out=-90,in=90] (0,0);
\draw[line width=3pt,white] (0,1) to [out=-90,in=90] (1,0);
\draw[thick,->] (0,1) to [out=-90,in=90] (1,0);
\node (sw) at (0,-.2){\tiny $i$};\node (se) at (1,-.2){\tiny $j$};
\node (nw) at (0,1.2){\tiny $l$};\node (ne) at (1,1.2){\tiny $k$};
\end{tikzpicture}
=V_{i,j,k,l}(\bar\z)\z^{l-k}, \quad
\begin{tikzpicture}[scale=1,baseline=10]
\draw[thick,->] (0,1) to [out=-90,in=90] (1,0);
\draw[line width=3pt,white] (1,1) to [out=-90,in=90] (0,0);
\draw[thick,<-] (1,1) to [out=-90,in=90] (0,0);
\node (sw) at (0,-.2){\tiny $k$};\node (se) at (1,-.2){\tiny $l$};
\node (nw) at (0,1.2){\tiny $j$};\node (ne) at (1,1.2){\tiny $i$};
\end{tikzpicture}
=V_{k,l,i,j-1}(\z)\z^{k-j+1}.
\end{equation}
Note that the weights of the negative crossings are the complex conjugates
of the corresponding weights of the positive crossings.

Switching in now the general colors $m$ and $n$, we get the rules for the
positive crossings
\begin{equation}
\label{4xpositive}
\begin{aligned}
\begin{tikzpicture}[scale=1,baseline=10]
\draw[thick,<-] (0,1) to [out=-90,in=90] (1,0);
\draw[line width=3pt,white] (1,1) to [out=-90,in=90] (0,0);
\draw[thick,<-] (1,1) to [out=-90,in=90] (0,0);
\node (sw) at (0,-.2){\tiny $k$};\node (se) at (1,-.2){\tiny $l$};
\node (nw) at (0,1.2){\tiny $j$};\node (ne) at (1,1.2){\tiny $i$};
\node (upper) at (.3,.2){\tiny $m$};
\node (lowerer) at (.75,.2){\tiny $n$};
\end{tikzpicture}
=
\begin{tikzpicture}[scale=1,baseline=10]
\draw[thick,->] (1,1) to [out=-90,in=90] (0,0);
\draw[line width=3pt,white] (0,1) to [out=-90,in=90] (1,0);
\draw[thick,<-] (0,1) to [out=-90,in=90] (1,0);
\node (sw) at (0,-.2){\tiny $j$};\node (se) at (1,-.2){\tiny $k$};
\node (nw) at (0,1.2){\tiny $i$};\node (ne) at (1,1.2){\tiny $l$};
\node (upper) at (.25,.2){\tiny $n$};
\node (lowerer) at (.75,.2){\tiny $m$};
\end{tikzpicture}
= &
\begin{tikzpicture}[scale=1,baseline=10]
\draw[thick,->] (0,1) to [out=-90,in=90] (1,0);
\draw[line width=3pt,white] (1,1) to [out=-90,in=90] (0,0);
\draw[thick,->] (1,1) to [out=-90,in=90] (0,0);
\node (sw) at (0,-.2){\tiny $i$};\node (se) at (1,-.2){\tiny $j$};
\node (nw) at (0,1.2){\tiny $l$};\node (ne) at (1,1.2){\tiny $k$};
\node (upper) at (.3,.2){\tiny $m$};
\node (lowerer) at (.75,.2){\tiny $n$};
\end{tikzpicture}
=V_{i,j-m,k-n,l}(\z)\z^{k-l-n+(k-i-n)m},
\\ &
\begin{tikzpicture}[scale=1,baseline=10]
\draw[thick,<-] (1,1) to [out=-90,in=90] (0,0);
\draw[line width=3pt,white] (0,1) to [out=-90,in=90] (1,0);
\draw[thick,->] (0,1) to [out=-90,in=90] (1,0);
\node (sw) at (0,-.2){\tiny $k$};\node (se) at (1,-.2){\tiny $l$};
\node (nw) at (0,1.2){\tiny $j$};\node (ne) at (1,1.2){\tiny $i$};
\node (upper) at (.25,.2){\tiny $n$};
\node (lowerer) at (.75,.2){\tiny $m$};
\end{tikzpicture}
=V_{k,l,i-m,j-n-1}(\bar\z)\z^{j-1-k-n+(j-l-n)m}
\end{aligned}
\end{equation}
and the negative crossings
\begin{equation}
\label{4xnegative}
\begin{aligned}
\begin{tikzpicture}[scale=1,baseline=10]
\draw[thick,<-] (1,1) to [out=-90,in=90] (0,0);
\draw[line width=3pt,white] (0,1) to [out=-90,in=90] (1,0);
\draw[thick,<-] (0,1) to [out=-90,in=90] (1,0);
\node (sw) at (0,-.2){\tiny $k$};\node (se) at (1,-.2){\tiny $l$};
\node (nw) at (0,1.2){\tiny $j$};\node (ne) at (1,1.2){\tiny $i$};
\node (upper) at (.25,.2){\tiny $n$};
\node (lowerer) at (.75,.2){\tiny $m$};
\end{tikzpicture}
=
\begin{tikzpicture}[scale=1,baseline=10]
\draw[thick,<-] (0,1) to [out=-90,in=90] (1,0);
\draw[line width=3pt,white] (1,1) to [out=-90,in=90] (0,0);
\draw[thick,->] (1,1) to [out=-90,in=90] (0,0);
\node (sw) at (0,-.2){\tiny $j$};\node (se) at (1,-.2){\tiny $k$};
\node (nw) at (0,1.2){\tiny $i$};\node (ne) at (1,1.2){\tiny $l$};
\node (upper) at (.3,.2){\tiny $m$};
\node (lowerer) at (.75,.2){\tiny $n$};
\end{tikzpicture}
= &
\begin{tikzpicture}[scale=1,baseline=10]
\draw[thick,->] (1,1) to [out=-90,in=90] (0,0);
\draw[line width=3pt,white] (0,1) to [out=-90,in=90] (1,0);
\draw[thick,->] (0,1) to [out=-90,in=90] (1,0);
\node (sw) at (0,-.2){\tiny $i$};\node (se) at (1,-.2){\tiny $j$};
\node (nw) at (0,1.2){\tiny $l$};\node (ne) at (1,1.2){\tiny $k$};
\node (upper) at (.25,.2){\tiny $n$};
\node (lowerer) at (.75,.2){\tiny $m$};
\end{tikzpicture}
=V_{i,j-n,k-m,l}(\bar\z)\z^{l-k+(l-j+1+n)m},
\\ &
\begin{tikzpicture}[scale=1,baseline=10]
\draw[thick,->] (0,1) to [out=-90,in=90] (1,0);
\draw[line width=3pt,white] (1,1) to [out=-90,in=90] (0,0);
\draw[thick,<-] (1,1) to [out=-90,in=90] (0,0);
\node (sw) at (0,-.2){\tiny $k$};\node (se) at (1,-.2){\tiny $l$};
\node (nw) at (0,1.2){\tiny $j$};\node (ne) at (1,1.2){\tiny $i$};
\node (upper) at (.3,.2){\tiny $m$};
\node (lowerer) at (.75,.2){\tiny $n$};
\end{tikzpicture}
=V_{k,l,i-n,j-m-1}(\z)\z^{k-j+1+(k-i+1+n)m} \,.
\end{aligned}
\end{equation}
Note that for general $m,n$, the weights of the negative crossings are not
the complex conjugates of the corresponding weights of the positive crossings.
Note also that the eight r-matrices needed in Equations (16)-(19)
of~\cite{Ka:longknots}, up to powers of $\z$,  are all expressed in terms of
the symbols $V_{i,j,k,l}(\z)$ and their complex conjugates $V_{i,j,k,l}(\bar\z)$.

Additionally, we have relations between colored and uncolored weights
\begin{equation}
\label{4xsym}
\begin{tikzpicture}[scale=1,baseline=10]
\draw[thick,<-] (0,1) to [out=-90,in=90] (1,0);
\draw[line width=3pt,white] (1,1) to [out=-90,in=90] (0,0);
\draw[thick,<-] (1,1) to [out=-90,in=90] (0,0);
\node (sw) at (0,-.2){\tiny $k$};\node (se) at (1,-.2){\tiny $l$};
\node (nw) at (0,1.2){\tiny $j$};\node (ne) at (1,1.2){\tiny $i$};
\node (upper) at (.3,.2){\tiny $m$};
\node (lowerer) at (.75,.2){\tiny $n$};
\end{tikzpicture}
=
\begin{tikzpicture}[scale=1,baseline=10]
\draw[thick,->] (1,1) to [out=-90,in=90] (0,0);
\draw[line width=3pt,white] (0,1) to [out=-90,in=90] (1,0);
\draw[thick,<-] (0,1) to [out=-90,in=90] (1,0);
\node (sw) at (0,-.2){\tiny $j$};\node (se) at (1,-.2){\tiny $k$};
\node (nw) at (0,1.2){\tiny $i$};\node (ne) at (1,1.2){\tiny $l$};
\node (upper) at (.25,.2){\tiny $n$};
\node (lowerer) at (.75,.2){\tiny $m$};
\end{tikzpicture}
= 
\begin{tikzpicture}[scale=1,baseline=10]
\draw[thick,->] (0,1) to [out=-90,in=90] (1,0);
\draw[line width=3pt,white] (1,1) to [out=-90,in=90] (0,0);
\draw[thick,->] (1,1) to [out=-90,in=90] (0,0);
\node (sw) at (0,-.2){\tiny $i$};\node (se) at (1,-.2){\tiny $j$};
\node (nw) at (0,1.2){\tiny $l$};\node (ne) at (1,1.2){\tiny $k$};
\node (upper) at (.3,.2){\tiny $m$};
\node (lowerer) at (.75,.2){\tiny $n$};
\end{tikzpicture}
=
\begin{tikzpicture}[scale=1,baseline=10]
\draw[thick,<-] (0,1) to [out=-90,in=90] (1,0);
\draw[line width=3pt,white] (1,1) to [out=-90,in=90] (0,0);
\draw[thick,<-] (1,1) to [out=-90,in=90] (0,0);
\node (sw) at (0,-.2){\tiny $k-n$};\node (se) at (1,-.2){\tiny $l$};
\node (nw) at (0,1.2){\tiny $j-m$};\node (ne) at (1,1.2){\tiny $i$};
\end{tikzpicture}
\z^{(k-i-n)m}, \quad
\begin{tikzpicture}[scale=1,baseline=10]
\draw[thick,<-] (1,1) to [out=-90,in=90] (0,0);
\draw[line width=3pt,white] (0,1) to [out=-90,in=90] (1,0);
\draw[thick,->] (0,1) to [out=-90,in=90] (1,0);
\node (sw) at (0,-.2){\tiny $k$};\node (se) at (1,-.2){\tiny $l$};
\node (nw) at (0,1.2){\tiny $j$};\node (ne) at (1,1.2){\tiny $i$};
\node (upper) at (.25,.2){\tiny $n$};
\node (lowerer) at (.75,.2){\tiny $m$};
\end{tikzpicture}
=
\begin{tikzpicture}[scale=1,baseline=10]
\draw[thick,<-] (1,1) to [out=-90,in=90] (0,0);
\draw[line width=3pt,white] (0,1) to [out=-90,in=90] (1,0);
\draw[thick,->] (0,1) to [out=-90,in=90] (1,0);
\node (sw) at (0,-.2){\tiny $k$};\node (se) at (1,-.2){\tiny $l$};
\node (nw) at (0,1.2){\tiny $j-n$};\node (ne) at (1,1.2){\tiny $i-m$};
\end{tikzpicture}
\z^{(j-l-n)m}
\end{equation}
in the case of positive crossings, and
\begin{equation}
\label{4xsym2}
\begin{tikzpicture}[scale=1,baseline=10]
\draw[thick,<-] (1,1) to [out=-90,in=90] (0,0);
\draw[line width=3pt,white] (0,1) to [out=-90,in=90] (1,0);
\draw[thick,<-] (0,1) to [out=-90,in=90] (1,0);
\node (sw) at (0,-.2){\tiny $k$};\node (se) at (1,-.2){\tiny $l$};
\node (nw) at (0,1.2){\tiny $j$};\node (ne) at (1,1.2){\tiny $i$};
\node (upper) at (.25,.2){\tiny $n$};
\node (lowerer) at (.75,.2){\tiny $m$};
\end{tikzpicture}
=
\begin{tikzpicture}[scale=1,baseline=10]
\draw[thick,<-] (0,1) to [out=-90,in=90] (1,0);
\draw[line width=3pt,white] (1,1) to [out=-90,in=90] (0,0);
\draw[thick,->] (1,1) to [out=-90,in=90] (0,0);
\node (sw) at (0,-.2){\tiny $j$};\node (se) at (1,-.2){\tiny $k$};
\node (nw) at (0,1.2){\tiny $i$};\node (ne) at (1,1.2){\tiny $l$};
\node (upper) at (.3,.2){\tiny $m$};
\node (lowerer) at (.75,.2){\tiny $n$};
\end{tikzpicture}
= 
\begin{tikzpicture}[scale=1,baseline=10]
\draw[thick,->] (1,1) to [out=-90,in=90] (0,0);
\draw[line width=3pt,white] (0,1) to [out=-90,in=90] (1,0);
\draw[thick,->] (0,1) to [out=-90,in=90] (1,0);
\node (sw) at (0,-.2){\tiny $i$};\node (se) at (1,-.2){\tiny $j$};
\node (nw) at (0,1.2){\tiny $l$};\node (ne) at (1,1.2){\tiny $k$};
\node (upper) at (.25,.2){\tiny $n$};
\node (lowerer) at (.75,.2){\tiny $m$};
\end{tikzpicture}
=
\begin{tikzpicture}[scale=1,baseline=10]
\draw[thick,<-] (1,1) to [out=-90,in=90] (0,0);
\draw[line width=3pt,white] (0,1) to [out=-90,in=90] (1,0);
\draw[thick,<-] (0,1) to [out=-90,in=90] (1,0);
\node (sw) at (0,-.2){\tiny $k-m$};\node (se) at (1,-.2){\tiny $l$};
\node (nw) at (0,1.2){\tiny $j-n$};\node (ne) at (1,1.2){\tiny $i$};
\end{tikzpicture}
\z^{(l-j+n)m}, \quad
\begin{tikzpicture}[scale=1,baseline=10]
\draw[thick,->] (0,1) to [out=-90,in=90] (1,0);
\draw[line width=3pt,white] (1,1) to [out=-90,in=90] (0,0);
\draw[thick,<-] (1,1) to [out=-90,in=90] (0,0);
\node (sw) at (0,-.2){\tiny $k$};\node (se) at (1,-.2){\tiny $l$};
\node (nw) at (0,1.2){\tiny $j$};\node (ne) at (1,1.2){\tiny $i$};
\node (upper) at (.3,.2){\tiny $m$};
\node (lowerer) at (.75,.2){\tiny $n$};
\end{tikzpicture}
=
\begin{tikzpicture}[scale=1,baseline=10]
\draw[thick,->] (0,1) to [out=-90,in=90] (1,0);
\draw[line width=3pt,white] (1,1) to [out=-90,in=90] (0,0);
\draw[thick,<-] (1,1) to [out=-90,in=90] (0,0);
\node (sw) at (0,-.2){\tiny $k$};\node (se) at (1,-.2){\tiny $l$};
\node (nw) at (0,1.2){\tiny $j-m$};\node (ne) at (1,1.2){\tiny $i-n$};
\end{tikzpicture}
\z^{(k-i+n)m} 
\end{equation}
in the case of negative crossings.

We complement the rules by the weights on the four types of segments
\begin{equation}
  \label{segments}
\begin{tikzpicture}[yscale=.5,baseline=4]
\draw[thick,->] (0,0) to [out=90,in=-90] (0,1);
\node (n) at (0,1.3){\tiny $i$};\node (s) at (0,-.3){\tiny $j$};
\end{tikzpicture}
=
\begin{tikzpicture}[yscale=.5,baseline=4]
\draw[thick,<-] (0,0) to [out=90,in=-90] (0,1);
\node (n) at (0,1.3){\tiny $j$};\node (s) at (0,-.3){\tiny $i$};
\end{tikzpicture}
=
\begin{tikzpicture}[xscale=1,baseline=0]
\draw[thick,<-] (0,0) to [out=90,in=90] (1,0);
\node (w) at (0,-.2){\tiny $i$};\node (e) at (1,-.2){\tiny $j$};
\end{tikzpicture}
=
\begin{tikzpicture}[xscale=1,baseline=25]
\draw[thick,<-] (0,1) to [out=-90,in=-90] (1,1);
\node (w) at (0,1.2){\tiny $i$};\node (e) at (1,1.2){\tiny $j$};
\end{tikzpicture}
=\delta_{i,j}.
\end{equation}

With our rules, one can calculate weights of few simplest composed diagrams
in the colored case
\begin{equation}
\begin{tikzpicture}[yscale=1,baseline=18]
\coordinate (a1) at (0,1);
\coordinate (a2) at (.75,.5);
\coordinate (a3) at (0.25,.5);
\coordinate (a4) at (1,1);
\node at (0,1.2) {\tiny$j$};
\node at (1,1.2) {\tiny$i$};
\node at (.95,0.5) {\tiny$n$};
\draw[thick] (a1) to [out=-45,in=90] (a2);
\draw[thick] (a2) to [out=-90,in=-90] (a3);
\draw[line width=3pt,white] (a3) to [out=90,in=-135] (a4);
\draw[thick,->] (a3) to [out=90,in=-135] (a4);
\end{tikzpicture}
=\delta_{j,\prin{N}{i+1}},\quad 
\begin{tikzpicture}[yscale=1,baseline=18]
\coordinate (a1) at (0,1);
\coordinate (a2) at (.75,.5);
\coordinate (a3) at (0.25,.5);
\coordinate (a4) at (1,1);
\node at (0,1.2) {\tiny$j$};
\node at (1,1.2) {\tiny$i$};
\node at (.95,0.5) {\tiny$n$};
\draw[thick] (a2) to [out=-90,in=-90] (a3);
\draw[thick,->] (a3) to [out=90,in=-135] (a4);
\draw[line width=3pt,white] (a1) to [out=-45,in=90] (a2);
\draw[thick] (a1) to [out=-45,in=90] (a2);
\end{tikzpicture}
=\delta_{j,\prin{N}{i+1}}\z^n
\end{equation}
and
\begin{equation}
\begin{tikzpicture}[yscale=-1,baseline=-25]
\coordinate (a1) at (0,1);
\coordinate (a2) at (.75,.5);
\coordinate (a3) at (0.25,.5);
\coordinate (a4) at (1,1);
\node at (0,1.2) {\tiny$j$};
\node at (1,1.2) {\tiny$i$};
\node at (.95,0.5) {\tiny$n$};
\draw[thick] (a1) to [out=-45,in=90] (a2);
\draw[thick] (a2) to [out=-90,in=-90] (a3);
\draw[line width=3pt,white] (a3) to [out=90,in=-135] (a4);
\draw[thick,->] (a3) to [out=90,in=-135] (a4);
\end{tikzpicture}
=\delta_{i,\prin{N}{j+1}},\quad 
\begin{tikzpicture}[yscale=-1,baseline=-25]
\coordinate (a1) at (0,1);
\coordinate (a2) at (.75,.5);
\coordinate (a3) at (0.25,.5);
\coordinate (a4) at (1,1);
\node at (0,1.2) {\tiny$j$};
\node at (1,1.2) {\tiny$i$};
\node at (.95,0.5) {\tiny$n$};
\draw[thick] (a2) to [out=-90,in=-90] (a3);
\draw[thick,->] (a3) to [out=90,in=-135] (a4);
\draw[line width=3pt,white] (a1) to [out=-45,in=90] (a2);
\draw[thick] (a1) to [out=-45,in=90] (a2);
\end{tikzpicture}
=\delta_{i,\prin{N}{j+1}}\z^{-n}
\end{equation}
which imply that
\begin{equation}
\begin{tikzpicture}[yscale=1,baseline=27]
\coordinate (a1) at (0,1);
\coordinate (a2) at (.75,.5);
\coordinate (a3) at (0.25,.5);
\coordinate (a4) at (1,1);
\node at (1,1.2) {\tiny$i$};
\draw[thick] (a2) to [out=-90,in=-90] (a3);
\draw[thick,->] (a3) to [out=90,in=-135] (a4);
\draw[line width=3pt,white] (a1) to [out=-45,in=90] (a2);
\draw[thick] (a1) to [out=-45,in=90] (a2);
\begin{scope}[yscale=-1,
xshift=-1cm,yshift=-2cm]
\coordinate (a1) at (0,1);
\coordinate (a2) at (.75,.5);
\coordinate (a3) at (0.25,.5);
\coordinate (a4) at (1,1);
\node at (0,1.2) {\tiny$j$};
\node at (.95,0.5) {\tiny$n$};
\draw[thick] (a1) to [out=-45,in=90] (a2);
\draw[thick] (a2) to [out=-90,in=-90] (a3);
\draw[line width=3pt,white] (a3) to [out=90,in=-135] (a4);
\draw[thick,->] (a3) to [out=90,in=-135] (a4);
\end{scope}
\end{tikzpicture}
=\delta_{i,j}\z^{n},\quad
\begin{tikzpicture}[yscale=1,baseline=27]
\coordinate (a1) at (0,1);
\coordinate (a2) at (.75,.5);
\coordinate (a3) at (0.25,.5);
\coordinate (a4) at (1,1);
\node at (1,1.2) {\tiny$i$};
\draw[thick] (a1) to [out=-45,in=90] (a2);
\draw[thick] (a2) to [out=-90,in=-90] (a3);
\draw[line width=3pt,white] (a3) to [out=90,in=-135] (a4);
\draw[thick,->] (a3) to [out=90,in=-135] (a4);

\begin{scope}[yscale=-1,
xshift=-1cm,yshift=-2cm]
\coordinate (a1) at (0,1);
\coordinate (a2) at (.75,.5);
\coordinate (a3) at (0.25,.5);
\coordinate (a4) at (1,1);
\node at (0,1.2) {\tiny$j$};
\node at (.95,0.5) {\tiny$n$};
\draw[thick] (a2) to [out=-90,in=-90] (a3);
\draw[thick,->] (a3) to [out=90,in=-135] (a4);
\draw[line width=3pt,white] (a1) to [out=-45,in=90] (a2);
\draw[thick] (a1) to [out=-45,in=90] (a2);
\end{scope}
\end{tikzpicture}
=\delta_{i,j}\z^{-n} \,.
\end{equation}

It is interesting to note, that the continuous spectral parameter can be used in
an alternative formulation of knot and link invariants along the idea of
Jones~\cite{Jones:statistical} of considering a ``statistical mechanics'' model
on piecewise linear diagrams with straight segments where the multiplicative
angles are encoded in the continuous spectral parameters. In our case, such an
approach might be applicable only in the case with color $n\equiv-1\pmod N$ but
not for other values of $n$.  In the case $n\equiv-1\pmod N$, the starting
graphical rules would look like 

\begin{equation}
\begin{tikzpicture}[scale=1,baseline=10]
\draw[thick,<-] (0,1) -- (1,0);
\draw[line width=3pt,white] (1,1) -- (0,0);
\draw[thick,<-] (1,1) -- (0,0);
\node (sw) at (0,-.2){\tiny $k$};\node (se) at (1,-.2){\tiny $l$};
\node (nw) at (0,1.2){\tiny $j$};\node (ne) at (1,1.2){\tiny $i$};
\draw (.59-.3 ,.59-.3 ) arc (-135:-45:.3);
\node (angle) at (.5,.1){\tiny $x$};
\end{tikzpicture}
=\langle i,j|r(x;-1,-1)|k,l\rangle,\quad 
\begin{tikzpicture}[scale=1,baseline=10]
\draw[thick,->] (0,0) -- (.5,0.5)--(1,.5);
\node (nw) at (-0.1,0){\tiny $j$};\node (ne) at (1.1,.5){\tiny $i$};
\draw (.7,.5 ) arc (0:-135:.2);
\node (angle) at (.6,.2){\tiny $x$};
\end{tikzpicture}
=\langle i|h(-1/x,0)|j\rangle \,,
\end{equation}
where the continuous variables $x$'s are thought of as exponentiated angles $e^{i\alpha}$,
whose product around each vertex is $1$.

\subsection{An example}
\label{sub.ex41}

In this sub-section we write an explicit expression for the invariant
$\langle 4_1 \rangle_{N,n}$ of the simplest hyperbolic knot $4_1$ in the case of
a primitive $N$-th root of unity $\z$ and a color $n \in \BZ/N\BZ$.
The invariant is an $N \times N$ matrix whose entries
$M_{i,j}:=\langle i|\langle 4_1\rangle_{N,n}|j\rangle$ are given by

\begin{equation}
\label{41inv} 
\begin{aligned}
M_{i,j} &=
\sum_{k_1,\dots,k_7}
\begin{tikzpicture}[scale=.4,baseline=55]
\coordinate (a0) at (0,0);
\coordinate (a1) at (4,7);
\coordinate (a2) at (2,9);
\coordinate (a3) at (0,7);
\coordinate (a4) at (4,4);
\coordinate (a5) at (0,4);
\coordinate (a6) at (2,7);
\coordinate (a7) at (0,10);
\draw[thick] (a1) to [out=90,in=0] (a2);
\draw[thick,->] (a4) to [out=-90,in=-90] (a5);
\draw[thick,->]  (a6) to [out=90,in=-90] (a7);
\draw[line width=3pt,white] (a0) to [out=90,in=-90] (a1);
\draw[thick] (a0) to [out=90,in=-90] (a1);
\draw[line width=3pt,white] (a2) to [out=180,in=90] (a3);
\draw[thick,->] (a2) to [out=180,in=90] (a3);
\draw[line width=3pt,white] (a3) to [out=-90,in=90] (a4);
\draw[thick,->] (a3) to [out=-90,in=90] (a4);
\draw[line width=3pt,white] (a5) to [out=90,in=-90] (a6);
\draw[thick,->]  (a5) to [out=90,in=-90] (a6);
\node at (.3,.4){\tiny $j$};
\node  at (.3,9.6){\tiny $i$};
\node at (3,3.8){\tiny $k_1$};\node  at (3,9.2){\tiny $k_2$};
\node at (.6,6.6){\tiny $k_3$};\node  at (2.4,5.75){\tiny $k_4$};
\node  at (3,2.5){\tiny $k_5$};\node at (.6,3.8){\tiny $k_6$};
\node  at (2.4,7.5){\tiny $k_7$};
\node at (.8,1.5){\tiny $n$};
\end{tikzpicture}
=\sum_{k_1,\dots,k_7}
\begin{tikzpicture}[baseline=12]
\node at (0,2){$ V_{i,k_3-n,k_7-n,k_2}(\bar\z)\z^{k_2-k_7+(k_2-k_3+1+n)n}$};
\node at (0,1){$ V_{k_6,k_4,k_7-n,k_3-n-1}(\z)\z^{k_6-k_3+1+(k_6-k_7+1+n)n}$};
\node at (0,0){$ V_{k_1,k_5,k_2-n,k_4-n-1}(\bar\z)\z^{k_4-k_1-1+(k_4-k_5-1-n)n}$};
\node at (0,-1){$ V_{k_1,k_6-n,j-n,k_5}(\z)\z^{j-k_5+(j-k_1-1-n)n}$};
\end{tikzpicture}
\\ &
=\sum_{k_1,\dots,k_7} V_{i,k_3-n,k_7-n,k_2}(\bar\z)V_{k_6,k_4,k_7-n,k_3-n-1}(\z)
V_{k_1,k_5,k_2-n,k_4-n-1}(\bar\z)V_{k_1,k_6-n,j-n,k_5}(\z)
\\ &
\qquad \times\z^{(j-k_1+k_2-k_3+k_4-k_5+k_6-k_7)(n+1)}
\\ &
= \sum_{k_1,\dots,k_7}V_{i,k_3,k_7,k_2}(\bar\z)V_{k_6,k_4,k_7,k_3-1}(\z)
V_{k_1,k_5,k_2-n,k_4-n-1}(\bar\z)V_{k_1,k_6-n,j-n,k_5}(\z)
\\ &
\qquad \times\z^{(j-k_1+k_2-k_3+k_4-k_5+k_6-k_7-2n)(n+1)}
\\ &
= \sum_{k_1,\dots,k_7}V_{0,k_3,k_7,k_2}(\bar\z)V_{k_6,k_4,k_7,k_3-1}(\z)
V_{k_1,k_5,k_2-n,k_4-n-1}(\bar\z)V_{k_1,k_6-n,j-i-n,k_5}(\z)
\\ &
\qquad \times\z^{(j-i-k_1+k_2-k_3+k_4-k_5+k_6-k_7-2n)(n+1)}
\\ &
= \sum_{k_1,\dots,k_7}V_{0,k_3,k_7,k_2}(\bar\z)V_{0,k_4,k_7-k_6,k_3-k_6-1}(\z)
V_{0,k_5,k_2+k_1-k_6,k_4+k_1-1}(\bar\z)
\\ &
\qquad \times V_{0,k_1,j-i-k_6+k_1,k_5}(\z)
\z^{(j-i+2k_1+k_2-k_3+k_4-k_5-k_7)(n+1)} \,.
\end{aligned}
\end{equation}

The above is a sum over $(\BZ/N\BZ)^7$ and it is not clear how to
simplify to sum in fewer variables. Numerical calculations suggest that
$\langle K \rangle_{N,n}$ is a multiple of the identity matrix. 

\subsection{A conjecture}
\label{sub.conj}

When $n=-1$, Equation~\eqref{fourier-trans-r-ma4} implies that the invariant of a long knot $K$ is a multiple 
of the identity $N\times N$ matrix, and the multiple is the specialization of the $N$-colored Jones polynomial that enters the Volume
Conjecture~\cite{K95,K97}. When $n \in \BZ/N\BZ$ is arbitrary,
we conjecture the following. 

\begin{conjecture}
\label{conj.2}  
For all knots $K$, strictly positive integers $N$ and $n$, and a primitive
$N$-th root of unity $\z$, we have
\be
\label{conj2}
\langle K \rangle_{N,n} = J^K_{n+1}(\z) 1_N 
\ee
where $1_N$ denotes the identity $N \times N$ matrix.
\end{conjecture}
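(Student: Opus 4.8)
I would approach~\eqref{conj2} in two stages. The first is to show that the matrix is scalar, $\langle K \rangle_{N,n}=c_{K,N,n}(\z)\,1_N$ for some $c_{K,N,n}(\z)\in\BC$; the second is to identify the scalar as $c_{K,N,n}(\z)=J^K_{n+1}(\z)$. The first stage accounts for the experimental observation recorded after~\eqref{41inv}, while the second carries the genuine geometric content, tying the root-of-unity R-matrix state-sum to a classical quantum invariant.

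\emph{Stage 1 (scalarity).} First I would prove that $\langle K \rangle_{N,n}$ is circulant, i.e.\ that its entries $M_{i,j}$ depend only on $j-i\bmod N$. This rests on the covariance of the r-matrix under the cyclic shift $C\colon|i\rangle\mapsto|i+1\rangle$ of $\BC^N$: since $V_{i,j,k,l}(\z)$ depends on its indices only through the differences $j-i-1$, $l-k$, $i-l$, $k-j$, each crossing weight in~\eqref{4xpositive}--\eqref{4xnegative}, together with its $\z$-phase, is unchanged under the simultaneous shift of all four of $i,j,k,l$, and the cups and caps of~\eqref{segments} are $C$-invariant as well. Hence $\langle K\rangle_{N,n}$ commutes with $C$ and is circulant, exactly as one sees in the final line of~\eqref{41inv}, where the dependence is only through $j-i$. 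To upgrade ``circulant'' to ``scalar,'' the cleanest route is the standard fact that a $(1,1)$-tangle endomorphism of a \emph{simple} object in a rigid braided category is a scalar multiple of the identity; this would apply once one proves that $\BC^N$ is simple for the r-matrix, equivalently that $\operatorname{End}(\BC^N)$ in the generated category reduces to scalars---an irreducibility statement that itself requires proof. A complementary, more hands-on attempt would be to exhibit a second operator, Fourier-dual to $C$ via~\eqref{fourier-trans-of-h}, commuting with the invariant; together with $C$ it would generate $\operatorname{Mat}_N(\BC)$ and force scalarity, though the cups and caps of~\eqref{segments} are not manifestly covariant under the clock matrix, so this needs extra care.

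\emph{Stage 2 (identifying the scalar).} Here I would anchor at the specialization $n\equiv-1\pmod N$, where the remark after~\eqref{fourier-trans-r-ma4} identifies the r-matrix with the standard colored Jones R-matrix of the $N$-dimensional representation, giving $c_{K,N,N-1}(\z)=J^K_N(\z)=J^K_{(N-1)+1}(\z)$. The case $n=0$, where~\eqref{conj2} predicts $\langle K\rangle_{N,0}=J^K_1(\z)\,1_N=1_N$, should supply a second anchor. To interpolate to general $n$ I would use the color-shift relations~\eqref{4xsym} and~\eqref{4xsym2}, which rewrite the color-$n$ crossing weight as the uncolored weight with the lower indices shifted by $n$, up to an explicit power of $\z$. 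Propagating these shifts and phases around the diagram should recast the discrete spectral parameter $n$ as a highest-weight shift of the cyclic module; after the appropriate decomposition of $\BC^N$ this selects the $(n+1)$-dimensional irreducible $\mathfrak{sl}_2$-summand, and hence the color $n+1$. Concretely one expects to recognize the color-$n$ r-matrix as a gauge transform of a nilpotent/ADO R-matrix at $\z$ with weight governed by $n$, and then to invoke the equivalence of the ADO invariants with the colored Jones polynomials~\cite{Willetts}; alternatively, r-matrix fusion should yield a recursion for $c_{K,N,n}(\z)$ in $n$ matching the colored Jones cabling recursion, which the two anchors then pin down.

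\textbf{The main obstacle.} The crux is Stage 2 for general $n$. The r-matrix acts on the \emph{fixed} space $\BC^N$ for every color, whereas the classical $J_{n+1}$ is assembled from the $(n+1)$-dimensional representation, so the two invariants have a priori unrelated state spaces. The representation-change is concealed in the interaction of the color-shift relations with the cups and caps---precisely where a naive relabelling of arcs fails to cancel---and turning this into an identity valid \emph{uniformly over all knots}, rather than in the worked examples, is the heart of the difficulty. A secondary hurdle, already visible in~\eqref{41inv}, is that the resulting $(\BZ/N\BZ)^{7}$-type state-sums resist reduction to fewer summation variables, so one is pushed toward a structural argument (functoriality plus irreducibility) rather than a direct evaluation.
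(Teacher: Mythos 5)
There is no proof in the paper for you to be compared against: the statement is precisely Conjecture~\ref{conj.2}, and the paper leaves it open. The only things the paper establishes are (i) the case $n\equiv-1\pmod N$, where the computation ending in \eqref{fourier-trans-r-ma4} identifies the Fourier-conjugated r-matrix at $m=n=-1$ with the standard colored Jones R-matrix, so that the invariant is the $N$-colored Jones polynomial at $\z$ (the Kashaev invariant) times $1_N$, and (ii) the purely numerical observation, recorded after \eqref{41inv}, that $\langle K\rangle_{N,n}$ appears to be a multiple of the identity. So the relevant question is whether your plan closes the conjecture, and it does not: it is a research program whose two stages each contain an unproven step that carries the real content.

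Concretely: in Stage 1, the shift-covariance argument is sound as far as it goes --- the $V$-symbols depend only on index differences, the accompanying powers of $\z$ in \eqref{4xpositive}--\eqref{4xnegative} involve only differences, and the weights \eqref{segments} are shift-invariant, so $\langle K\rangle_{N,n}$ is indeed circulant (consistent with the last line of \eqref{41inv}) --- but a circulant matrix is merely diagonal in the Fourier basis, not scalar; the upgrade needs the simplicity of $\BC^N$ in the category generated by $r(1;m,n)$, which you do not prove, and your fallback clock-matrix argument breaks exactly where you flag it (the caps and cups are not clock-covariant). In Stage 2, your first anchor ($n\equiv-1$) is the paper's known case, but your second anchor $n=0$ is itself part of the conjecture: the paper asserts $\langle K\rangle_{N,0}=1_N$ only as a consequence of \eqref{conj2}, and the color-$0$ r-matrix is nontrivial, so this is not available as an input. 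The interpolation step --- propagating the color-shift relations \eqref{4xsym}--\eqref{4xsym2} around a diagram to recognize the color-$n$ r-matrix as a gauge transform of an ADO-type R-matrix, or to derive a cabling-type recursion in $n$ --- is exactly the open problem restated, not a reduction of it; nothing in the paper supplies the representation-theoretic identification you would need, and your own ``main obstacle'' paragraph concedes this. In short, your proposal is a reasonable plan, aligned with the partial evidence the paper actually has, but it is not a proof, and no proof exists in the paper.
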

In particular, $\langle K \rangle_{N,0}=1_N$ is a trivial invariant coming from
a nontrivial r-matrix over an $N$-dimensional vector space.


\bibliographystyle{hamsalpha}
\bibliography{biblio}
\end{document}